\theoremstyle{plain}
\newtheorem{theorem}{Theorem}[section]
\newtheorem{lemma}[theorem]{Lemma}
\theoremstyle{remark}
\newtheorem{rem}[theorem]{Remark}
\newcommand{\OO}{{\mathcal O}}
\newcommand{\Z}{{\mathbb Z}}
\newcommand{\Q}{{\mathbb Q}}
\newcommand{\C}{{\mathbb C}}
\newcommand{\R}{{\mathbb R}}
\newcommand{\K}{{\mathbb K}}
\numberwithin{equation}{section}
\begin{document}

\title[On $k$-generalized Fibonacci numbers]{On the $k$-generalized Fibonacci numbers with negative indices}
\subjclass[2010]{11A63, 52C22}
\keywords{Fibonacci numbers, $k$-generalized Fibonacci numbers, diophantine equations}

\author[Attila~Peth\H{o}]{Attila~Peth\H{o}}
\address{Department of Computer Science,\\ University of Debrecen,\\
H-4002 Debrecen, P.O. Box 400}
\email{petho.attila@unideb.hu}
\date{\today}

\begin{abstract}
In these notes we study the $k$-generalized Fibonacci sequences - $(F_n^{(k)})_{n\in \Z}$ - with positive and negative indices. Denote $T_k(x)$ its characteristic polynomial. Our most interesting finding is that if $k$ is even then the absolute value of the second real root of $T_k(x)$ is minimal among the roots. Combining this with a deep result of Bugeaud and Kaneko \cite{BK} we prove that there are only finitely many perfect powers in $(F_n^{(k)})_{n\in \Z}$, provided $k$ is even. Another consequence is that, if $k$ and $l$ denote even integers then the equation $F_m^{(k)} = \pm F_n^{(l)}$ has only finitely many effectively computable solutions in $(n,m)\in \Z^2$. In the case $k=l=4$ we establish all solutions of this equation.
\end{abstract}

\maketitle

\centerline{\it Dedicated to the $80^{th}$ birthday of K\'alm\'an Gy\H{o}ry}

\section{Introduction}

The Fibonacci sequence, which is defined by the initial terms $F_0=0, F_1=1$ and by the recursion $F_{n+2}=F_{n+1}+F_n$ is one of the most investigated sequences of integers. There are 36 books in the MathSciNet with the word "Fibonacci" in their title. The Fibonacci numbers have the analytic expression
$$
F_n = \frac{\alpha^n - \beta^n}{\alpha - \beta} \; \mbox{with} \; \alpha=\frac{1+\sqrt{5}}{2}, \; \mbox{and} \; \beta=\frac{1-\sqrt{5}}{2},
$$
which is called Bin\'et formula.

The Fibonacci sequence can be computed not only forward, but backward, too by using the recursion $F_n = -F_{n+1} + F_{n+2}$. Replacing here $n$ by $-n$ we get $F_{-n} = - F_{-(n-1)} + F_{-(n-2)}$, which can be written in the more familiar form $F_m= -F_{m-1} +F_{m-2}$. The numbers $\alpha$ and $\beta$ are the roots of the characteristic polynomial $x^2-x-1$ of the Fibonacci sequence. Thus they satisfy $\alpha \beta =-1$, i.e, $\beta = -\alpha^{-1}$ and $\alpha=-\beta^{-1}$. Inserting this in the Bin\'et formula we get
$$
F_{-n} = \frac{\alpha^{-n} - \beta^{-n}}{\alpha - \beta} = (-1)^n\frac{\alpha^{n} - \beta^{n}}{\alpha - \beta} = (-1)^n F_n,
$$
which shows strong relation between positive and negative indices Fibonacci numbers.

Among the many generalizations of the Fibonacci sequence we concentrate here to the $k$-generalized Fibonacci sequence\footnote{These are called in the literature by  multinacci sequence \cite{Arkin}, generalized Fibonacci sequence \cite{Wolfram}. There appeared recently many papers on diophantine properties calling our subject $k$-generalized Fibonacci sequence, see e.g. \cite{Dresden, Marques,Sanna}, hence we use it, too.}. Let $k\ge 2$, and define the sequence $(F_n^{(k)})$ by the initial values $F_n^{(k)}=0$ for $n=-k+2,\dots,0, F_1^{(k)}=1$ and by the $k^{th}$ order recursion $F_{n+k}^{(k)}= F_{n+k-1}^{(k)}+\dots+F_{n}^{(k)}, n\ge -k+2$. Of course for $k=2$ we get the Fibonacci sequence.

Denote by $T_k(x)$ the characteristic polynomial of $(F_n^{(k)})$, i.e., set $T_k(x)=x^k-x^{k-1}-\ldots-1$. Denote $\alpha_1,\dots,\alpha_k$ the roots of $T_k(x)$. Miles \cite{Miles} and later Miller \cite{Miller} proved that $T_k(x)$ has simple roots. Wolfram \cite{Wolfram} seems to be the first who dealt with the location of the roots. He proved that $T_k(x)$ is a Pisot polynomial, i.e. all but one roots lie inside the unit disc. More precisely he showed \cite[Lemma 3.6]{Wolfram} that $\alpha_1> 2(1-2^{-k})$, $T_k(x)$ `` has one negative real root in the interval $(-1,0)$ when $k$ is even. This root and each complex root $r$ has modulus $3^{-k} < |r |< 1$." It is well-known that the terms of linear recurrences can be expressed in the form $F_n^{(k)}= c_1 \alpha_1^n +\dots+c_k\alpha_k^n$ with $c_1,\dots,c_k$ suitable constants. Let $g_k(x) = \frac{x-1}{2+(k+1)(x-2)}$. In the actual case Dresden \cite{Dresden} proved that $c_l = \frac1{\alpha_l}g_k(\alpha_l)$. Moreover he proved the very sharp inequality
\begin{equation}\label{eq:Dresden1}
  |F_n^{(k)}- g_k(\alpha_1)\alpha_1^{n-1}| < \frac12.
\end{equation}
For simplicity we will take $C_l=g_k(\alpha_l), l=1,\dots,k$ and notice $C_l\not= 0$. Indeed, $C_l=0$ if and only iff $\alpha_l=1$, which is impossible. With this notation we get
\begin{equation}\label{eq:Binet}
  F_n^{(k)}= C_1 \alpha_1^{n-1} +\dots+C_k\alpha_k^{n-1}.
\end{equation}
Dresden, following Wolfram, proved this formula for $n\ge 0$.

Like the Fibonacci sequence, the $k$-generalized Fibonacci sequence too, can be continued to negative indices such that the terms are rational integers. This happens because the coefficient of $F_n^{(k)}$ in the recursion is one. Of course we have $F_{n}^{(k)} = -F_{n+1}^{(k)}-\dots-F_{n+k-1}^{(k)}+F_{n+k}^{(k)}$. Replacing here $n$ by $-n$ we get
$$
F_{-n}^{(k)} = -F_{-(n-1)}^{(k)}-\dots-F_{-(n-k+1)}^{(k)}+F_{-(n-k)}^{(k)}.
$$
Notice that Dresden's formula \eqref{eq:Binet} remains true for negative $n$'s too. To apply it, for example, to study diophantine properties of $F_{-n}^{(k)}$ we need more precise information on the roots of $T_k(x)$. We present some of them in these notes. After the elementary proof I searched the literature for similar results, and found only the paper of Ruiz and Luca \cite{RL}. Although it includes quite accurate estimation on the size of the roots of $T_k(x)$, it does not study the location of the second real root if $k$ is even.

Our most interesting finding is that if $k$ is even then the second real root of $T_k(x)$ has the least absolute value among the roots. Combining this with a deep result of Bugeaud and Kaneko \cite{BK} we prove (Theorem \ref{th:4-genppowers}) that there are only finitely many perfect powers in $(F_n^{(k)})_{n\in \Z}$, provided $k$ is even. Another consequence is that, if $k$ and $l$ denote even integers then the equation $F_m^{(k)} = \pm F_n^{(l)}$ has only finitely many effectively computable solutions in $(n,m)\in \Z^2$ (Theorem \ref{th:effective}). In the case $k=l=4$ we establish all solutions of this equation (Theorem \ref{th:4-gen}).

\section{On the roots of $T_k(x)$}

Our first result is that the roots of $T_k(x)$ are not only simple, but two roots may have same absolute value only if they are conjugate complex numbers. Our first theorem follows easily from a result of Mignotte \cite{Mig}, but our proof is elementary.

\begin{theorem}\label{th:egyenlo}
  Let $z_1,z_2$ be different roots of $T_k(x)$ such that $|z_1|= |z_2|$. Then $z_2=\bar{z_1}$.
\end{theorem}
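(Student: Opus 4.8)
The plan is to argue by contradiction: suppose $z_1, z_2$ are distinct roots of $T_k(x)$ with $|z_1| = |z_2| =: \rho$ and $z_2 \neq \bar z_1$. Since $\alpha_1 > 1$ is the unique root outside the closed unit disc and it is real (hence equal to its own conjugate), neither $z_1$ nor $z_2$ can be $\alpha_1$; thus $\rho < 1$. The key identity is the one obtained by rewriting $T_k(x) = 0$ in a telescoped form. Multiplying $T_k(x) = x^k - x^{k-1} - \cdots - 1$ by $(x-1)$ gives $(x-1)T_k(x) = x^{k+1} - 2x^k + 1$, so every root $\alpha$ of $T_k$ with $\alpha \neq 1$ satisfies
\[
\alpha^{k+1} - 2\alpha^k + 1 = 0, \qquad \text{i.e.} \qquad \alpha^k(2 - \alpha) = 1.
\]
In particular $z_1^k(2 - z_1) = z_2^k(2 - z_2) = 1$.

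From $|z_j^k(2 - z_j)| = 1$ we get $\rho^k |2 - z_j| = 1$ for $j = 1, 2$, hence $|2 - z_1| = |2 - z_2| = \rho^{-k}$. So $z_1$ and $z_2$ lie simultaneously on the circle $|z| = \rho$ centered at the origin and on the circle $|z - 2| = \rho^{-k}$ centered at $2$. Two distinct circles meet in at most two points, and when they do meet in two points those points are reflections of one another across the line through the two centers — here the real axis. Therefore the only way to have $z_1 \neq z_2$ both on this intersection is $z_2 = \bar z_1$, contradicting our assumption — unless the two intersection points are both real, i.e. unless $z_1, z_2$ are real. So it remains to rule out two distinct \emph{real} roots of the same absolute value, which forces $z_2 = -z_1$.

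Thus suppose $z_1 = t$ and $z_2 = -t$ are both roots, with $0 < t < 1$ (the case $t=0$ is excluded since $0$ is not a root of $T_k$, and $t=1$ is excluded as above). Plugging into $\alpha^k(2-\alpha)=1$ gives $t^k(2 - t) = 1$ and $(-t)^k(2 + t) = 1$. If $k$ is odd, the second equation reads $-t^k(2+t) = 1$, which is impossible since the left side is negative. If $k$ is even, then $t^k(2-t) = t^k(2+t) = 1$ forces $2 - t = 2 + t$, hence $t = 0$, a contradiction. In either parity we reach a contradiction, completing the proof.

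The only point requiring a little care — and the one I expect to be the main obstacle to a fully rigorous write-up — is the geometric step: justifying that two circles which intersect in two distinct points have those points symmetric about the line joining their centers, and handling cleanly the degenerate sub-case where that symmetry line (the real axis) actually contains both intersection points. This is elementary plane geometry, but it is exactly the place where the "complex conjugate" conclusion enters, so it should be stated explicitly rather than waved through; the real-root sub-case is then disposed of by the short parity computation above using $\alpha^k(2-\alpha) = 1$.
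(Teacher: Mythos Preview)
Your proof is correct and, in fact, somewhat cleaner than the paper's. Both arguments begin from the same key identity $(x-1)T_k(x)=x^{k+1}-2x^k+1$, equivalently $z^k(2-z)=1$ for every root $z$ of $T_k$. From there the paper subtracts the two instances of $r^{k+1}e^{i(k+1)\omega_j}-2r^k e^{ik\omega_j}+1=0$ and analyses the arguments of the resulting complex numbers trigonometrically to force $\omega_2=-\omega_1$. You instead take absolute values to obtain $|2-z_1|=|2-z_2|=\rho^{-k}$ and observe that $z_1,z_2$ lie on the intersection of two circles with centers $0$ and $2$ on the real axis; elementary geometry then gives $z_2=\bar z_1$. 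Your route avoids the trigonometric bookkeeping entirely. The real-roots sub-case is handled the same way in both proofs (via the parity of $k$ in $z^k(2-z)=1$), and your parity computation is fine.

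One small remark: your caution about the possibility that the two circle-intersection points are both real is unnecessary. Two distinct circles with centers on the real axis have their radical line perpendicular to the real axis, so two distinct intersection points are automatically non-real complex conjugates; the case $z_2=-z_1$ you treat separately could therefore be absorbed into the geometric step (since $|2-t|=|2+t|$ already forces $t=0$). Also, the observation $\rho<1$ is never used and can be dropped.
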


\begin{proof}
Following Dresden \cite{Dresden} set $f_k(x)=T_k(x)(x-1)=x^{k+1}-2x^k +1$. Beside $1$, the roots of $f_k(x)$ are the same as of $T_k(x)$. The roots of $T_k(x)$ are simple.

   If $z_1,z_2$ are both real then $|z_1|= |z_2|$ implies $z_2=-z_1$. Assuming $z_1>0$ we have
  $$
  z_1^{k+1}-2z_1^k+1 = (-1)^{k+1}z_1^{k+1}-2(-1)^k z_1^k +1 =0,
  $$
  and after simple transformations we get
  $$
  z_1-2 = (-1)^{k+1}(z_1+2).
  $$
  This is impossible if $k$ is odd. If $k$ is even, then $z_1=2$, which is never a root of $T_k(x)$.

  \medskip

  In the sequel we assume that $z_1,z_2$ are non-real complex numbers. Denote $r$ their common absolute value, and $\omega_1, \omega_2$ their arguments, i.e., let $z_1=r(\cos\omega_1+i\sin \omega_1), z_2=r(\cos\omega_2+i\sin \omega_2)$. The equation $f_k(z_1)=f_k(z_2)=0$ implies
  $$
  r^{k+1}u_1-2r^{k}v_1+1
  = 0=
  r^{k+1}u_2-2r^{k}v_2+1.
  $$
  with
  \[\begin{array}{lclccl}
  u_1&=&\cos(k+1)\omega_1 + i\sin(k+1)\omega_1, &u_2&=& \cos(k+1)\omega_2 + i\sin(k+1)\omega_2,\\
  v_1&=& \cos k\omega_1 + i\sin k\omega_1, &v_2 &=&\cos k\omega_2 + i\sin k\omega_2.
  \end{array}
  \]
  After some elementary manipulation we obtain
  \begin{equation}\label{eq:trigon}
    r(u_1-u_2) = 2(v_1-v_2).
  \end{equation}
  The argument of the complex number $u_1-u_2$ is $\frac{\pi}{2}+\frac{(k+1)\omega_1+(k+1)\omega_2}{2}$. Denote its absolute value by $s$. Similarly, the argument of $v_1-v_2$ is $\frac{\pi}{2}+\frac{k\omega_1+k\omega_2}{2}$. Denote its absolute value by $t$. We have $s=0$ if and only if $u_1= u_2$ and $t=0$. Then $v_1= v_2$ holds as well. These identities imply $e^{i\omega_1} = e^{i\omega_2} $ by using Euler's identity $\cos\alpha+i\sin\alpha = e^{i\alpha}$. Hence $\omega_1=\omega_2$, i.e.,  $z_1=z_2$.

  In the sequel we may assume $st\not=0$. Now \eqref{eq:trigon} implies
  $$
  rs \cdot e^{-i (k+1) \frac{\omega_1+\omega_2}{2}}  = 2t \cdot e^{-i k\frac{\omega_1+\omega_2}{2}}\quad \leftrightarrow \quad rs \cdot e^{-i \frac{\omega_1+\omega_2}{2}}  = 2t .
  $$

  Hence $\omega_2 = -\omega_1$, i.e. $z_2= \bar{z_1}$, as stated.
\end{proof}

\begin{rem}
  By our first theorem, except the conjugate complex pairs, the absolute values of the roots of $T_k(x)$ are different. If $k\not=l$, then the polynomials $T_k(x)$ and $T_l(x)$ have no common roots. We conjecture the stronger statement, that if $k\not=l$, then the absolute values of the roots of $T_k(x)$ and $T_l(x)$ are different.
\end{rem}

Let $k$ be odd. Then $T_k(x)$ has one real root $\alpha_1$ and $\frac{k-1}{2}$ pairs of conjugate complex roots, whose modulus are by Theorem \ref{th:egyenlo} different.
%We will prove later
%$$
%|T_n^{(k)}| = |g_k(\alpha_{k-1}) \alpha_{k-1}^{n-1} + g_k(\overline{\alpha_{k-1}}) \overline{\alpha_{k-1}}^{n-1}| %+ o(|\alpha_{k-1}^{n-1}|),
%$$
%provided $n\le 0$.
What happens, if $k$ is even, when $T_k(x)$ has two real roots? The second real root lie by Wolfram \cite{Wolfram} in the interval $(-1,-3^{-k})$, but what is its comparison to the complex roots? Our next result shows that the smallest in modulus root is the second real root. To prove this we use Rouch\'e's theorem in the following form, see e.g. Filaseta \cite{Filaseta}.

\begin{theorem} \label{th:Rouche}
  Let $f(z)$ and $g(z)$ be polynomials with complex coefficients and $C=\{z\; : \; |z|=1\}$. If the strict inequality
  $$
  |f(z)+g(z)| < |f(z)| + |g(z)|
  $$
  holds at each point on the circle $C$, then $f(z)$ and $g(z)$ must have the same total number of roots (counting multiplicity) strictly inside $C$.
\end{theorem}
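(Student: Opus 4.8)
The plan is to derive this symmetric form of Rouch\'e's theorem from the argument principle. First I would dispose of the zeros lying on the circle: if $f(z_0)=0$ for some $z_0\in C$, then $|f(z_0)+g(z_0)|=|g(z_0)|=|f(z_0)|+|g(z_0)|$, contradicting the strict hypothesis; so $f$ has no zero on $C$, and by the same argument neither does $g$ (in particular $g\not\equiv 0$). Thus the quotient $q(z):=f(z)/g(z)$ is defined and finite at every point of $C$, and there
\[
|f(z)+g(z)|<|f(z)|+|g(z)|\iff |q(z)+1|<|q(z)|+1\iff q(z)\notin[0,\infty),
\]
the last equivalence being the equality analysis of the triangle inequality $|w+1|\le |w|+1$. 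Hence the hypothesis says exactly that the image of the closed curve $z\mapsto q(z)$, $z\in C$, is contained in the slit plane $U:=\C\setminus[0,\infty)$.

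Next I would apply the argument principle to the rational function $q=f/g$. Writing $N_f$ and $N_g$ for the number of zeros of $f$, respectively $g$, strictly inside $C$ counted with multiplicity, the order of $q$ at any point equals $\mathrm{ord}(f)-\mathrm{ord}(g)$ there, so — allowing for possible common zeros of $f$ and $g$ inside $C$ — the number of zeros minus poles of $q$ inside $C$ is $N_f-N_g$, while $q$ has neither zeros nor poles on $C$. Therefore
\[
N_f-N_g=\frac{1}{2\pi i}\oint_C\frac{q'(z)}{q(z)}\,dz,
\]
which is the winding number about the origin of the curve $z\mapsto q(z)$, $z\in C$.

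Finally I would invoke the geometry obtained in the first step: since $q(C)\subseteq U$ and $U$ is simply connected and omits $0$, there is a single-valued holomorphic branch $L$ of $\log$ on $U$; then $L\circ q$ is a well-defined function on $C$ with $d(L\circ q)=q'/q$, and the integral of an exact differential around the closed contour $C$ vanishes. Hence $N_f-N_g=0$, which is the assertion.

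I expect no genuine obstacle here; the only points demanding a little care are the bookkeeping of common zeros of $f$ and $g$ inside $C$ — handled uniformly by working with $\mathrm{ord}(f)-\mathrm{ord}(g)$ — and the elementary equality case of the triangle inequality that converts the displayed hypothesis into the statement $q(C)\subseteq\C\setminus[0,\infty)$. One could alternatively quote the general (not necessarily polynomial) symmetric Rouch\'e theorem from complex analysis and specialize it, but the argument-principle proof sketched above is short and self-contained.
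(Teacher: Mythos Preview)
The paper does not give its own proof of this theorem; it is quoted as a known tool with a reference to Filaseta. Your argument is correct and is essentially the standard proof of the symmetric (Estermann--Glicksberg) form of Rouch\'e's theorem: the observation that the strict inequality is equivalent to $q(z)=f(z)/g(z)\notin[0,\infty)$ on $C$, followed by the argument principle and the vanishing of the winding number in the simply connected slit plane, is exactly the right route. Your treatment of possible common zeros of $f$ and $g$ inside $C$ via $\mathrm{ord}(f)-\mathrm{ord}(g)$ is also correct and avoids the trap of assuming $q$ is already in lowest terms.
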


In the sequel $\alpha_{k1},\ldots,\alpha_{kk}$ will denote the roots of $T_k(x)$, and we assume that they are ordered as $|\alpha_{k1}|\ge \ldots,\ge |\alpha_{kk}|$. If, however, $k$ is in some sense fixed, for example we are dealing with one $T_k(x)$, then the first index will be omitted.

\begin{theorem} \label{th:smallest}
  Let $\alpha_1,\dots,\alpha_{2k}$ be the roots of $T_{2k}(x)$. Then $\alpha_{2k}$ is real and $|\alpha_{2k}|< |\alpha_{j}|$ for $j=1,\dots,2k-1$.
\end{theorem}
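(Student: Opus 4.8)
The plan is to sidestep Rouch\'e's theorem and argue directly from the factorization $f_{2k}(x)=(x-1)T_{2k}(x)=x^{2k+1}-2x^{2k}+1$ already used in the proof of Theorem \ref{th:egyenlo}, together with Wolfram's description \cite{Wolfram} of the root structure: for even degree $2k$ the polynomial $T_{2k}(x)$ has exactly two real roots, the dominant (Pisot) root $\alpha_1>1$ and a root $\gamma\in(-1,0)$, while the remaining $2k-2$ roots form $k-1$ non-real conjugate pairs. Since the roots are distinct, it suffices to show that $|\gamma|<|\alpha|$ for every root $\alpha\neq\gamma$ of $T_{2k}(x)$; then $\gamma$ is the unique root of smallest modulus, so in the ordering $|\alpha_1|\ge\cdots\ge|\alpha_{2k}|$ one has $\alpha_{2k}=\gamma$, which is real, and $|\alpha_{2k}|<|\alpha_j|$ for $j=1,\dots,2k-1$.

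First I would record the basic identity. If $\alpha$ is a root of $T_{2k}(x)$ then $\alpha\neq1$, so $f_{2k}(\alpha)=0$, which rearranges to $\alpha^{2k}(2-\alpha)=1$, whence $|\alpha|^{2k}\,|2-\alpha|=1$. Put $\rho=|\gamma|=-\gamma\in(0,1)$; since $\gamma$ is negative real, $|2-\gamma|=2+\rho$, so $\rho^{2k}(2+\rho)=1$. Introduce $\phi(t)=t^{2k}(2+t)=2t^{2k}+t^{2k+1}$, which is strictly increasing on $(0,\infty)$ because $\phi'(t)=4k\,t^{2k-1}+(2k+1)t^{2k}>0$; hence $\rho$ is the unique positive solution of $\phi(t)=1$.

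Now I would bound the moduli of the other roots. The dominant root trivially satisfies $|\alpha_1|>1>\rho$. Let $\alpha$ be a non-real root, $r=|\alpha|$, and write $\alpha=a+bi$ with $b\neq0$. Then $|2-\alpha|^2=(2-a)^2+b^2=4-4a+r^2<4+4r+r^2=(2+r)^2$, the strict inequality coming from $a>-r$ (equality $a=-r$ would force $b=0$). So $|2-\alpha|<2+r$, and therefore $1=r^{2k}|2-\alpha|<r^{2k}(2+r)=\phi(r)$. Since $\phi$ is strictly increasing and $\phi(\rho)=1$, this yields $r>\rho$. Combining the two cases, $|\alpha|>\rho=|\gamma|$ for every root $\alpha\neq\gamma$, which is what we wanted.

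The argument is short, and I do not foresee a genuine obstacle; the one point to get right is to rewrite $f_{2k}(\alpha)=0$ as $\alpha^{2k}(2-\alpha)=1$, so that the modulus factors cleanly and the extremality of the negative real root becomes a one-line consequence of the triangle inequality plus the monotonicity of $\phi$. One should also double-check the (standard) fact, due to Wolfram, that for even $k$ every root of $T_{2k}(x)$ other than $\alpha_1$ and $\gamma$ is non-real, since that is what guarantees the minimum modulus is attained only at $\gamma$. Rouch\'e's theorem, while available, is not needed here; it would instead be the natural tool if one wanted the sharper conclusion that each non-real root lies in a prescribed annulus.
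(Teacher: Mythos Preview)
Your argument is correct and takes a genuinely different route from the paper. The paper passes to the reciprocal polynomial $g_{2k}(x)=x^{2k+1}f_{2k}(-1/x)=x^{2k+1}-2x-1$, locates its unique real root $\lambda\in(1,2)$, and then applies Rouch\'e's theorem on circles $|z|=\mu>\lambda$ (comparing $x^{2k+1}$ with $2x+1$) to force every root of $g_{2k}$ into the closed disc $|z|\le\lambda$; finally it invokes Theorem~\ref{th:egyenlo} to conclude that $\lambda$ is the sole root on the boundary, whence $\lambda=-1/\alpha_{2k}$. Your approach stays with $f_{2k}$ itself, rewrites $f_{2k}(\alpha)=0$ as $|\alpha|^{2k}\,|2-\alpha|=1$, and observes that among all complex numbers of a given modulus $r$ the quantity $|2-\alpha|$ is maximized, and only maximized, at the negative real point $-r$; the strict monotonicity of $\phi(t)=t^{2k}(2+t)$ then immediately gives $|\alpha|>|\gamma|$ for every non-real root. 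This is shorter and more elementary: it needs neither Rouch\'e's theorem nor the separate appeal to Theorem~\ref{th:egyenlo}, since the strict inequality $|2-\alpha|<2+|\alpha|$ for $\alpha\notin(-\infty,0]$ already delivers the strictness. The paper's route, on the other hand, packages the estimate in a form (dominant root of $g_{2k}$) that it reuses verbatim in Theorem~\ref{th:sorrend}. The one point you flag yourself---that $T_{2k}$ has exactly two real roots---is indeed contained in Wolfram's description cited in the introduction, and can in any case be read off from the fact that $f_{2k}'(x)=x^{2k-1}\bigl((2k+1)x-4k\bigr)$ has only the critical points $0$ and $4k/(2k+1)$.
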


\begin{proof}
  Let
  $$
  g_{2k}(x) = x^{2k+1}f_{2k}\left(-\frac{1}{x}\right) = x^{2k+1}-2x-1,
  $$
  where $f_k(x)$ denotes the polynomial introduced in the proof of Theorem \ref{th:egyenlo}. As $g_{2k}(1)=-2$ and $g_{2k}(2)\ge 3$ the polynomial $g_{2k}(x)$ has a real root in $(1,2)$, which we denote by $\lambda$. As $g_{2k}(x)$ is monotone increasing on $(1,\infty)$ the number $\lambda$ is its only real root in this interval. Let $\mu>\lambda$, then $g_{2k}(\mu)>0$. If $z\in \C$ is such that $|z|= \mu$, then
  $$
  |2z + 1| \le 2|z| + 1 = 2\mu +1 < \mu^{2k+1}.
  $$
  Since $x^{2k+1}$ has $2k+1$ roots inside the disc $|z| <\mu$, it follows from Theorem \ref{th:Rouche}, that $g_{2k}(x)$ has the same number of roots inside this disc, i.e., the absolute value of all roots of $g_{2k}(x)$ is at most $\mu$. As $\mu>\lambda$ is arbitrary all roots lie in the closed disc $|z|\le \lambda$. By Theorem \ref{th:egyenlo} $\lambda$ is the only root of $g_{2k}(x)$ with absolute value $\lambda$, thus the absolute value all other roots must be smaller, i.e. $\lambda = -1/\alpha_{2k}$.

\end{proof}

\begin{theorem} \label{th:sorrend}
  Let $\alpha_{k1},\dots,\alpha_{kk}$ be the roots of $~T_k(x)$ ordered as $|\alpha_{k1}|\ge \dots \ge |\alpha_{kk}|$. Then
  \begin{enumerate}
    \item if $0<k <l$ then $\alpha_{2l,2l} < \alpha_{2k,2k}$,
    \item if $0<k <l$ then $\alpha_{l,1} > \alpha_{k,1}$,
    \item if $l\ge 1$ and $k\ge 2$, then  $\alpha_{k1}\ge -\frac{1}{\alpha_{2l,2l}}$, and equality holds if and only if $l=1,k=2$.
  \end{enumerate}
\end{theorem}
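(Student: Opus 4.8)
The plan is to derive all three items from elementary monotonicity properties of the auxiliary polynomials $f_k(x)=x^{k+1}-2x^k+1$ and $g_{2k}(x)=x^{2k+1}-2x-1$ already used in the proofs of Theorems~\ref{th:egyenlo} and~\ref{th:smallest}. Recall from the proof of Theorem~\ref{th:smallest} that $\alpha_{2k,2k}=-1/\lambda_k$, where $\lambda_k$ is the unique real root of $g_{2k}$ in the interval $(1,2)$, so that in particular $\lambda_k^{2k+1}=2\lambda_k+1$; and from $f_k(x)=x^k(x-2)+1$ one sees that the dominant root satisfies $\alpha_{k,1}\in(1,2)$ together with $\alpha_{k,1}^k\,(2-\alpha_{k,1})=1$.

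For (1), since $\alpha_{2k,2k},\alpha_{2l,2l}\in(-1,0)$ the inequality $\alpha_{2l,2l}<\alpha_{2k,2k}$ is equivalent to $\lambda_l<\lambda_k$. I would simply evaluate $g_{2l}$ at $\lambda_k$: using $\lambda_k^{2k+1}=2\lambda_k+1$ gives
$$
g_{2l}(\lambda_k)=\lambda_k^{2l+1}-\lambda_k^{2k+1}=\lambda_k^{2k+1}\bigl(\lambda_k^{2(l-k)}-1\bigr)>0,
$$
because $\lambda_k>1$ and $l>k$. Since $g_{2l}'(x)=(2l+1)x^{2l}-2>0$ on $(1,\infty)$ and $g_{2l}(1)=-2<0$, the root $\lambda_l$ must lie in $(1,\lambda_k)$, which proves (1). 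Statement (2) is the mirror image: evaluating $f_l$ at $\alpha_{k,1}$ and using $\alpha_{k,1}^k(2-\alpha_{k,1})=1$ yields $f_l(\alpha_{k,1})=\alpha_{k,1}^l(\alpha_{k,1}-2)+1=1-\alpha_{k,1}^{\,l-k}<0$; dividing by $\alpha_{k,1}-1>0$ gives $T_l(\alpha_{k,1})<0$. Since every root of $T_l$ other than $\alpha_{l,1}$ has modulus less than $1$ (Wolfram's lemma, or Theorems~\ref{th:egyenlo} and~\ref{th:smallest}), $T_l$ has no sign change on $(1,\infty)$ apart from the one at $\alpha_{l,1}$; as $T_l(x)\to+\infty$, the inequality $T_l(\alpha_{k,1})<0$ forces $\alpha_{k,1}<\alpha_{l,1}$.

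For (3), the crucial special case is $l=1$: here $g_2(x)=x^3-2x-1=(x+1)(x^2-x-1)$, so $\lambda_1=\tfrac{1+\sqrt5}{2}=\alpha_{2,1}$. I would then chain (1) and (2) in their $\lambda$- and $\alpha_{\cdot,1}$-forms: for $l\ge 1$ one has $\lambda_l\le\lambda_1$ with equality iff $l=1$, and for $k\ge 2$ one has $\alpha_{k,1}\ge\alpha_{2,1}$ with equality iff $k=2$. Combining,
$$
\alpha_{k,1}\ \ge\ \alpha_{2,1}\ =\ \lambda_1\ \ge\ \lambda_l\ =\ -\frac{1}{\alpha_{2l,2l}},
$$
with equality throughout precisely when $k=2$ and $l=1$.

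I do not expect a real obstacle here; the one place that needs care is the assertion in (2) that on $(1,\infty)$ the polynomial $T_l$ changes sign only at $\alpha_{l,1}$, which is exactly where the earlier root-location results (Theorem~\ref{th:egyenlo}, Theorem~\ref{th:smallest}, or Wolfram's lemma) are needed. A minor bookkeeping point is to keep the normalisation ``$\lambda_k=$ the root of $g_{2k}$ in $(1,2)$'' consistent, and to note that the boundary cases $k=1$ in (1) and $l=1$ in (3) --- where $T_2$ is the ordinary Fibonacci polynomial --- are covered by the same computations.
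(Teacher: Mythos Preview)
Your proof is correct and follows essentially the same approach as the paper: both argue via the auxiliary polynomials $g_{2k}$ and $f_k$ for (1) and (2), and both obtain (3) by chaining the monotonicity of $\lambda_k$ and $\alpha_{k,1}$ together with the identity $\lambda_1=\alpha_{2,1}=(1+\sqrt5)/2$. Your execution of (1) and (2)---evaluating $g_{2l}$ (respectively $f_l$) at the root $\lambda_k$ (respectively $\alpha_{k,1}$) and reading off the sign---is slightly more direct than the paper's, which instead subtracts the two defining equations, divides by $\xi_k-\xi_l$ (after justifying $\xi_k\neq\xi_l$ via irreducibility), and then compares signs.
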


\begin{proof}
  (1) Set $\xi_l=-1/\alpha_{2l,2l}$, and $\xi_k=-1/\alpha_{2k,2k}$. Then $\xi_k,\xi_l>1$ are roots of $g_{2l}(x)$, and $g_{2k}(x)$ respectively, where $g_{2k}(x)$ denotes the polynomial introduced in the proof of Theorem \ref{th:smallest}. The identities $g_{2k}(\xi_k)=g_{2l}(\xi_l)=0$ imply
  $$
  \xi_k^{2k+1} - \xi_l^{2l+1} = 2(\xi_k -\xi_l),
  $$
  which we can rearrange as
  $$
  \xi_k^{2k+1} - \xi_l^{2k+1} - 2(\xi_k -\xi_l)   = \xi_l^{2k+1}(\xi_l^{2(l-k)}-1).
  $$
  After division by $\xi_k -\xi_l$, which is non-zero because $\alpha_{2l,2l}$ and $\alpha_{2l,2l}$ are roots of the irreducible polynomials $T_l(x)$ and $T_k(x)$ respectively, we obtain
  $$
  \sum_{j=1}^{2k}\xi_k^j \xi_l^{2k-j} -2 = \frac{\xi_l^{2k+1}(\xi_l^{2(l-k)}-1)}{\xi_k -\xi_l}.
  $$
  As $\xi_k,\xi_l>1$ the left hand side and the numerator of the right hand side are positive, hence the equality is only possible if $\xi_k> \xi_l$, i.e., if $\alpha_{2l,2l} < \alpha_{2k,2k}$.

  (2) Using that $\alpha_{k1}>1$ is a root of $f_k(x)$ we can prove this statement by the same argument as (1).

(3) By (1), as a function in $k$, $\xi_k = -1/\alpha_{2k,2k}$ is strongly monotone decreasing, while by (2) $\alpha_{k1}$ is strongly monotone increasing. We finish the proof with the equality $\xi_2=\frac{1+\sqrt{5}}{2}= \alpha_{21}$.

\end{proof}

\section{Ineffective results}

\subsection{Perfect powers} In this section we start the investigation of the diophantine properties of the $k$-generalized Fibonacci sequences. We pay special attention their members with negative indices. We start with perfect powers, which is one of my favourite topics. In a recent paper Bugeaud and Kaneko \cite{BK} proved

\begin{theorem}\label{t:Bugeaud_Kaneko}
  Let $(u_n)_{n\ge 0}$ be a linear recurrence sequence of integers of order at least two
  and such that its characteristic polynomial is irreducible and has a dominant root. Then there
are only finitely many perfect powers in $(u_n)_{n\ge 0}$. Moreover, their number can be bounded by
an effectively computable constant.
\end{theorem}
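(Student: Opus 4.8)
The plan is to derive both the finiteness and the effective bound on the count from the asymptotics $u_n\sim c_1\alpha^n$ forced by the dominant root $\alpha$, organised according to the size of the exponent. I would first record the elementary preliminaries. The characteristic polynomial being monic and irreducible over $\Q$ of degree $k\ge 2$, the product of the moduli of its roots equals the absolute value of its nonzero constant term, hence is at least $1$; since the dominant root $\alpha$ is the unique root of largest modulus this forces $|\alpha|>1$, and the same uniqueness forces $\alpha\in\R$. A short Galois argument — apply an automorphism of the splitting field carrying a prescribed root onto $\alpha$ and compare coefficients in the Binet expansion $u_n=\sum_{i=1}^{k}c_i\alpha_i^{\,n}$ — shows that every $c_i$ is nonzero, using that the order is at least two. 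Consequently, with an effective $\delta\in(0,1)$ and effective constants, $|u_n-c_1\alpha^n|\ll|\alpha|^{\delta n}$ and $|u_n|\asymp|\alpha|^n$, which tends to infinity; thus all but finitely many (effectively listable) indices satisfy $|u_n|\ge 2$, and since every perfect power is a prime power it suffices to bound the number of pairs $(n,q)$ with $q$ prime and $u_n$ a $q$-th power.

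For bounded exponents I would fix a prime $q\le q_0$, with $q_0$ to be chosen later, and invoke the known fact that for each fixed $q$ the equation $u_n=y^q$ has finitely many, effectively bounded solutions $(n,y)$: substituting $n=q\ell+r$ turns it into a finite union of Thue--Mahler equations over $\Q(\alpha)$ resolvable by Baker's method; equivalently, $q\log|y|-n\log|\alpha|-\log|c_1|$ is exponentially small in $n$ but, when nonzero, bounded below by a linear-forms-in-logarithms estimate, whereas its vanishing gives $|y|^q=|c_1|\,|\alpha|^n$, which has finitely many solutions by a valuation argument. The union over the finitely many primes $q\le q_0$ is then effectively bounded.

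The large-exponent range $q>q_0$ is the heart of the matter. Here $|y|^q=|u_n|\asymp|\alpha|^n$ with $|y|\ge 2$ forces $n\gg q$, so $n$ is automatically large; writing $n=q\ell+r$ and extracting the $q$-th root $\rho$ of $c_1\alpha^n=(c_1\alpha^r)(\alpha^\ell)^q$ nearest to $y$, one obtains $|y-\rho|\ll|y|^{\,1-q(1-\delta)}$, an approximation that for large $q$ is far sharper than Liouville's inequality allows for the algebraic number $\rho$, whose degree is $O(kq)$. The obstruction is that $\rho$ varies with $n$, so Roth's theorem cannot be quoted directly, and the Baker argument of the previous paragraph now yields only the insufficient bound $q\ll\log n$. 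I would therefore pass to the Subspace Theorem: regard $u_n=y^q$, after the substitution $n=q\ell+r$, as a near-vanishing of a suitable system of linear forms in the quantities $y$, $\alpha^\ell$ and the subdominant contributions (the conjugate powers $\alpha_i^{\,\ell}$), and apply the quantitative Subspace Theorem of Evertse and Schlickewei. This yields a bound on the number of solutions lying outside finitely many proper subspaces that is uniform in $q$ and in $n$; each exceptional subspace encodes a multiplicative relation among $\alpha$, $c_1$ and $y$ and is disposed of individually, again with a count independent of $q$. Assembling the cases proves the theorem.

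I expect this Subspace-Theorem step — specifically, the bookkeeping that makes its output independent of the exponent $q$ — to be the main difficulty. It is precisely this uniformity in $q$, namely that large primes $q$ contribute nothing beyond a bounded amount, that elementary and Baker-type methods cannot deliver; it is also why the final bound is effective for the number of perfect powers while leaving their size ineffective.
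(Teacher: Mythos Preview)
This theorem is not proved in the paper. It is quoted as a result of Bugeaud and Kaneko \cite{BK} and then applied as a black box in the proof of Theorem~\ref{th:4-genppowers}; there is no ``paper's own proof'' to compare your proposal against.

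As for the sketch itself: the overall architecture --- split according to the size of the exponent $q$, handle bounded $q$ by Baker's method, and handle large $q$ via a quantitative Subspace/Schmidt-type argument to obtain a bound on the \emph{number} of solutions uniform in $q$ --- is the standard strategy in this circle of problems (Corvaja--Zannier, Fuchs, Bugeaud--Kaneko). Your identification of the crux, namely the uniformity-in-$q$ step, is accurate. That said, what you have written is a plan rather than a proof: the Subspace step requires a concrete choice of linear forms and places and a verification that the exceptional subspaces can be handled with a count independent of $q$, none of which is carried out here. If your intent was to reproduce the paper's argument, there is nothing to reproduce; if your intent was to reprove the Bugeaud--Kaneko theorem, you would need to fill in that Subspace-Theorem bookkeeping in detail, or simply cite \cite{BK} as the present paper does.
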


A consequence of it is

\begin{theorem} \label{th:4-genppowers}
  If $k>0$ is even then there are only finitely many perfect powers in the (two sided) sequence $(F_n^{(k)})_{n\in \Z}$.
  Moreover, their number can be bounded by an effectively computable number.
\end{theorem}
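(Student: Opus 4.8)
The plan is to split the two-sided sequence into two one-sided linear recurrences and apply Theorem~\ref{t:Bugeaud_Kaneko} to each. Write $(F_n^{(k)})_{n\in\Z}$ as the union of the forward part $(F_n^{(k)})_{n\ge 0}$ and the backward part $(G_n)_{n\ge 0}$, where $G_n:=F_{-n}^{(k)}$. Since a union of two finite sets is finite, and the two effective bounds simply add, it suffices to show that each of these one-sided sequences contains only finitely many perfect powers, the count being effectively bounded.

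For the forward part, the characteristic polynomial is $T_k(x)$, which is irreducible (the irreducibility already invoked in the proof of Theorem~\ref{th:sorrend}) and, being a Pisot polynomial by Wolfram~\cite{Wolfram}, has the dominant root $\alpha_{k1}$; the recurrence has order $k\ge 2$, and since $T_k$ is irreducible it is the minimal recurrence. Hence Theorem~\ref{t:Bugeaud_Kaneko} applies directly and gives the statement for $(F_n^{(k)})_{n\ge 0}$. Note that this step does not use the parity of $k$.

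For the backward part, substituting $-n$ for $n$ in $F_n^{(k)}=-F_{n+1}^{(k)}-\dots-F_{n+k-1}^{(k)}+F_{n+k}^{(k)}$ shows that $(G_n)$ is an integer linear recurrence of order $k$, namely $G_n=-G_{n-1}-\dots-G_{n-k+1}+G_{n-k}$, whose characteristic polynomial is the reciprocal polynomial $R_k(x):=-x^kT_k(1/x)=x^k+x^{k-1}+\dots+x-1$. Its roots are exactly $1/\alpha_{k1},\dots,1/\alpha_{kk}$; it is irreducible because $T_k$ is and $T_k(0)=-1\ne 0$; and its order is $k\ge 2$. The remaining hypothesis of Theorem~\ref{t:Bugeaud_Kaneko}, the existence of a dominant root, is exactly where ``$k$ even'' enters: ordering $|\alpha_{k1}|\ge\dots\ge|\alpha_{kk}|$, Theorem~\ref{th:smallest} (applied with $k$ even) says that $\alpha_{kk}$ is real and $|\alpha_{kk}|<|\alpha_{kj}|$ for all $j<k$; passing to reciprocals, $1/\alpha_{kk}$ is a real root of $R_k$ with $|1/\alpha_{kk}|>|1/\alpha_{kj}|$ for $j<k$, i.e.\ a dominant root. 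So Theorem~\ref{t:Bugeaud_Kaneko} applies to $(G_n)_{n\ge 0}$, and combining the two parts finishes the proof.

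The crux --- and the only place evenness is used --- is the dominant-root condition for $R_k$: its largest-modulus root is the reciprocal of the smallest-modulus root of $T_k$, and only for even $k$ is that root real and strictly dominated by the others in the required sense, by Theorem~\ref{th:smallest}. For odd $k$, Theorem~\ref{th:egyenlo} forces the two smallest-modulus roots of $T_k$ to be a complex-conjugate pair, so $R_k$ would have two roots of equal maximal modulus and this method would fail for the negative indices. A minor technicality to dispatch in passing is the convention for ``perfect power'' applied to the possibly negative terms $F_{-n}^{(k)}$, together with the harmless passage to a tail $(G_n)_{n\ge n_0}$ (where $G_n\ne 0$); neither affects finiteness or effectivity.
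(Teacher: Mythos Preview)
Your proof is correct and follows essentially the same route as the paper: split the two-sided sequence into the forward and backward one-sided recurrences, note that their characteristic polynomials are $T_k(x)$ and its reciprocal, verify irreducibility and (using Theorem~\ref{th:smallest} for the backward part when $k$ is even) the existence of a dominant root, and apply Theorem~\ref{t:Bugeaud_Kaneko} to each. Your write-up is in fact a bit more detailed than the paper's (you spell out the recurrence for $G_n$, the reason irreducibility transfers to the reciprocal, and why the argument breaks for odd $k$), but the underlying argument is the same.
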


\begin{proof}
  Spit $(F_n^{(k)})_{n\in \Z}$ into the union of two sequences $(F_n^{(k)})_{n\ge 0}$ and $(F_{-n}^{(k)})_{n> 0}$.
  Their characteristic polynomials are $T_k(x)$ and $x^kT_k(1/x)$. Wolfram \cite{Wolfram} proved that $T_k(x)$ is irreducible, thus
  $x^kT_k(1/x)$ is irreducible too. Again by Wolfram \cite{Wolfram} $\alpha_{1}$ is the dominating root of $T_k(x)$.
  If $k$ is even then by Theorem \ref{th:sorrend} $1/\alpha_{k}$ is the dominating root of $x^kT_k(1/x)$. Hence the
  assumptions of Theorem \ref{t:Bugeaud_Kaneko} hold for both sequences $(F_n^{(k)})_{n\ge 0}$ and $(F_{-n}^{(k)})_{n> 0}$. Thus
  there are only finitely many perfect powers in both sequences and in their union too.
\end{proof}

Bugeaud, Mignotte and Siksek \cite{BMS} established all perfect powers in the Fibonacci sequence, i.e. we know all solutions of the equation $F_n^{(2)}=x^q,\; n,x,q\in \Z,\; q\ge 2$. For $k>2$ already the ineffective Theorem \ref{t:Bugeaud_Kaneko} of Bugeaud and Kaneko is a big breakthrough. If $k$ is odd then $(F_n^{(k)})_{n\ge 0}$ satisfies the assumptions of Theorem \ref{t:Bugeaud_Kaneko}, but the irreducible polynomial $x^kT_k(1/x)$ has two, conjugate complex roots with maximal absolute value, hence this theorem is not applicable. By our opinion $(F_{-n}^{(k)})_{n> 0}$ has for $k$ odd  finitely many perfect powers too.

\medskip
\subsection{Common terms} In the sequel we concentrate on common terms of the $k$-generalized Fibonacci sequences. First we prove ineffective results, where the basic tool is the theory of $S$-unit equations. We define them here and cite the fundamental theorem on such equations. For an algebraic number field $\K$ denote $M_{\K}$ its set of places. Let $S\subset M_{\K}$ be finite including all archimedean places, let $\OO_{S}$ denote the set of {\it $S$-integers} of $\K$, i.e., the set of those elements $\alpha \in \K$ with $|\alpha|_v \le 1$ for all $v\in M_{\K}\setminus S$.

Consider the {\it weighted $S$-unit equation}
\begin{equation}\label{Sunit}
  \alpha_1 X_1+\dots+ \alpha_s X_s =1,
\end{equation}
where $s\ge 2$, $\alpha_1,\dots,\alpha_s$ are non-zero elements of $\K$ and the solutions $x_1,\dots,x_s$ belong to $\OO_S$. A solution $x_1,\dots,x_s$ of \eqref{Sunit} is called {\it degenerate} if there exists a proper subset $I$ of $\{1,\dots,s$\} such that $\sum_{i\in I}\alpha_ix_i = 0$. The next theorem was proved by Evertse \cite{Evertse} and independently by van der Poorten and Schlickewei \cite{PoortenSch}, see also \cite{Evertse_Gyory}.

\begin{theorem} \label{th:Sunit}
  Equation \eqref{Sunit} has only finitely many non-degenerate solutions in $x_1,\dots,x_s\in \OO_S$.
\end{theorem}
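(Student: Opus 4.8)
The plan is to deduce Theorem~\ref{th:Sunit} from the Subspace Theorem of W.~M.~Schmidt, in the form admitting several (including non-archimedean) absolute values due to H.~P.~Schlickewei, by induction on the number $s$ of variables. As the name ``$S$-unit equation'' indicates, the unknowns $x_1,\dots,x_s$ range over the group $\OO_S^*$ of $S$-units of $\K$; for such an $x$ one has $|x|_v=1$ for all $v\notin S$, hence $\prod_{v\in S}|x|_v=1$ by the product formula. For $\mathbf x=(x_1,\dots,x_s)$ write $\|\mathbf x\|_v=\max_{1\le i\le s}|x_i|_v$, so that the projective height equals $H(\mathbf x)=\prod_{v\in S}\|\mathbf x\|_v$. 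The statement being invariant under scaling the $\alpha_i$, I may and will also assume, if convenient, that the $\alpha_i$ are $S$-integers.

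For the inductive step I would feed into the Subspace Theorem the following families of linear forms. For each index $j\in\{1,\dots,s\}$ and each place $v_0\in S$, take at every $v\in S$ the coordinate forms $X_1,\dots,X_s$, except that at $v_0$ the form $X_j$ is replaced by $L(\mathbf X)=\alpha_1X_1+\dots+\alpha_sX_s$; since every $\alpha_i\neq0$, these $s$ forms are linearly independent over $\K$ at each place. If $\mathbf x$ is a solution of \eqref{Sunit}, then $L(\mathbf x)=1$, and a one-line computation using $\prod_{v\in S}|x_i|_v=1$ for every $i$ gives
\[
\prod_{v\in S}\prod_{i=1}^{s}\frac{|L_{i,v}(\mathbf x)|_v}{\|\mathbf x\|_v}=\frac{1}{|x_j|_{v_0}\,H(\mathbf x)^{s}}.
\]
Since $\prod_{v\in S}\|\mathbf x\|_v=H(\mathbf x)$, some $v_0\in S$ satisfies $\|\mathbf x\|_{v_0}\ge H(\mathbf x)^{1/|S|}$; choosing $j$ with $|x_j|_{v_0}=\|\mathbf x\|_{v_0}$, the right-hand side above is at most $H(\mathbf x)^{-s-\delta}$ with $\delta=1/|S|>0$. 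So for this pair $(j,v_0)$ the hypothesis of the Subspace Theorem is met, and since there are only finitely many such pairs, all non-degenerate solutions lie in a finite union of proper linear subspaces of $\K^s$.

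It remains to treat the solutions lying in one such subspace, which we may assume to be a hyperplane $T$, say $x_s=\sum_{i<s}d_ix_i$ on $T$. A non-degenerate solution has all $x_i\neq0$ (otherwise some singleton is a vanishing subsum); substituting the relation into \eqref{Sunit} produces $\sum_{i<s}\beta_ix_i=1$ with $\beta_i=\alpha_i+\alpha_sd_i$. Put $J=\{i<s:\beta_i\neq0\}$; discarding the indices outside $J$ leaves a weighted $S$-unit equation in $|J|<s$ variables, so by the induction hypothesis (or trivially if $|J|\le1$) the tuple $(x_i)_{i\in J}$ takes finitely many values, apart from those for which it is degenerate for that equation. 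Fixing one such value and moving the corresponding terms to the right in the hyperplane relation gives $\sum_{i\notin J,\,i<s}d_ix_i-x_s=c$ with a fixed constant $c$ and all coefficients non-zero (indeed $d_i\neq0$ for $i\notin J$, since $\beta_i=0$ while $\alpha_i,\alpha_s\neq0$). If $c\neq0$ this is, after division by $c$, again a weighted $S$-unit equation in fewer than $s$ variables, to which induction applies; if $c=0$ one checks that $\sum_{i\notin J,\,i<s}\alpha_ix_i+\alpha_sx_s=0$ is a vanishing proper subsum of \eqref{Sunit}, so these $\mathbf x$ are degenerate and drop out. Finally, a solution that is non-degenerate for the original equation but degenerate for one of the reduced equations satisfies an additional $\K$-linear relation, hence lies in a proper subspace of $T$, and is eliminated by descent on the dimension, which strictly decreases at each step; the whole induction is anchored at $s=2$, which is settled directly by the previous paragraph since the proper subspaces of $\K^2$ are lines, each carrying at most one solution.

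The analytic weight is carried entirely by the Subspace Theorem, which I would invoke as a black box. The part that requires real care is the reduction on the subspaces: one must check that \emph{every} way in which the reduced systems can collapse either produces an honest weighted $S$-unit equation with strictly fewer variables or exhibits a vanishing proper subsum of \eqref{Sunit}. This case analysis is the main obstacle, and it is precisely where the non-degeneracy hypothesis is indispensable, since without it \eqref{Sunit} typically has infinitely many solutions.
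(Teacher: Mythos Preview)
The paper does not give a proof of Theorem~\ref{th:Sunit}; it only quotes the result, attributing it to Evertse and, independently, to van der Poorten and Schlickewei. Your sketch supplies what the paper omits: the standard deduction from the Schmidt--Schlickewei Subspace Theorem. You correctly read the unknowns as $S$-units (the statement as written in the paper, with $x_i\in\OO_S$, would be false---take $x_1+x_2=1$ over $\Z$), your choice of linear forms and the height computation are the usual ones, and the conclusion that all solutions lie in finitely many proper subspaces is correct. The only part that is genuinely loose is the recursion within a hyperplane: you are in effect running two interleaved inductions (on the number $|J|$ of nonzero coefficients in the reduced equation and on the dimension of the ambient subspace), and in a full write-up one must verify that every branch either yields a weighted $S$-unit equation in strictly fewer variables or exhibits a proper vanishing subsum of the original equation; you do check the key case $c=0$ correctly, and you flag the remaining bookkeeping yourself. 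None of this is a real gap; the argument is essentially the one found in, e.g., Evertse--Gy\H{o}ry or Bombieri--Gubler.
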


Consider the diophantine equation
\begin{equation}\label{FMk=Fnl}
  F_m^{(k)} = F_n^{(l)}
\end{equation}
in integers $k,l>0$ and $n,m$. If $k$ and $l$ are fixed then \eqref{FMk=Fnl} has only finitely many effectively computable solutions in $n,m\ge 0$, see Mignotte \cite{Mignotte} and also Kiss \cite{KissP1,KissP2}.

Marques \cite{Marques} proved that if $l> k\ge 2, n>l+1$ and $m>k+1$ then \eqref{FMk=Fnl} has only the solutions
$$
(m,n,l,k) = (7,6,3,2) \; \mbox{and} \; (12,11,7,3).
$$
This result describes completely the intersection of two sets of $k$-generalized Fibonacci numbers with non-negative indexes because $F_1^{(k)}=1$, and $F_m^{(k)}=2^{k-1}$ for $k\ge 2, 1\le m\le k$. Much less is known if $n$ or $m$ is negative. In this direction the first step was done by Bravo, G\'omez and Luca \cite{BGL}, see also Bravo et al. \cite{BGLTK}, who solved completely the equation $F_m^{(3)} = F_n^{(3)}$ in integers $m,n$. Their proof depend basically on the relation $|\alpha_{3,2}| = |\alpha_{3,3}|= \sqrt{\alpha_{3,1}}$. It seems, unfortunately, that there is no similar simple algebraic relation between the roots of $T_k(x)$, provided $k>3$. However our investigations on the roots of $T_k(x)$ allows us to extend the results of Bravo et al. for $k>3$, although in a weaker ineffective form. Combining our Theorem \ref{th:egyenlo} with Theorem \ref{th:Sunit} we prove

\begin{theorem} \label{th:Fmk=Fnl}
  Let $l, k\ge 2$ be fixed. Then the diophantine equation \eqref{FMk=Fnl} has only finitely many solutions $(m,n)\in \Z^2$.
\end{theorem}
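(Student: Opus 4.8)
The plan is to linearise the equation by Dresden's formula and to reduce it to an $S$-unit equation. Since, as remarked above, \eqref{eq:Binet} is valid for every integer index, $F_m^{(k)}=F_n^{(l)}$ is equivalent to
$$
\sum_{i=1}^{k} C_i^{(k)}\alpha_{ki}^{m-1}=\sum_{j=1}^{l} C_j^{(l)}\alpha_{lj}^{n-1},
$$
a linear relation among the $k+l\ge 4$ powers $\alpha_{ki}^{m-1}$, $\alpha_{lj}^{n-1}$ with the fixed nonzero coefficients $C_i^{(k)}=g_k(\alpha_{ki})$ and $C_j^{(l)}=g_l(\alpha_{lj})$. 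Work in the field $\K$ generated by the roots of $T_k(x)T_l(x)$; since $T_k(0)=-1$, every $\alpha_{ki}$ and $\alpha_{lj}$ is an algebraic unit, so after enlarging $S$ to contain, besides the archimedean places, the finitely many finite places dividing some coefficient $C_i^{(k)}$ or $C_j^{(l)}$, each term of the displayed equation is an $S$-unit. Moving everything to one side and dividing by one term brings the relation into the shape \eqref{Sunit} with $s=k+l-1\ge 3$, so Theorem \ref{th:Sunit} is available.

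I would first dispose of the non-degenerate solutions. For these, Theorem \ref{th:Sunit} permits only finitely many values of the tuple of the $k+l$ terms taken up to a common scalar; in particular the quotients $(\alpha_{k1}/\alpha_{k2})^{m-1}$ and $(\alpha_{l1}/\alpha_{l2})^{n-1}$, being fixed nonzero multiples of ratios of two of the $k+l$ terms, run through finite sets. By Wolfram's Pisot property $|\alpha_{k1}|>|\alpha_{k2}|$ and $|\alpha_{l1}|>|\alpha_{l2}|$, so neither base is a root of unity, whence $m$ and $n$ lie in finite sets.

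It remains to bound the degenerate solutions, in which some proper subcollection of the $k+l$ terms already sums to zero. There are only finitely many ways to split the terms into minimal vanishing subsums, and it suffices to bound the pairs $(m,n)$ realising each such splitting. If some part is supported on roots of $T_k(x)$ alone (or on roots of $T_l(x)$ alone), then, regarded as a function of $m$ (resp. $n$), it is a non-degenerate linear recurrence — here one uses that no quotient of two distinct roots of $T_k(x)$ is a root of unity, which follows from Theorem \ref{th:egyenlo} together with Wolfram's description of the roots by a short Galois argument — and so it vanishes for only finitely many $m$ (resp. $n$); for each such value there are only finitely many admissible values of the other variable, because the two-sided sequence $(F_n^{(l)})_{n\in\Z}$ is itself a non-degenerate linear recurrence, with characteristic polynomial $T_l(x)$ for $n\ge 0$ and $x^lT_l(1/x)$ for $n\le 0$, and hence takes every value only finitely often. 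If every part is mixed and at least one of them has three or more terms, then Theorem \ref{th:Sunit} applied to that part forces $m$ or $n$ into a finite set, and then the other variable follows as before. The only surviving possibility is that every part pairs exactly one root of $T_k(x)$ with exactly one root of $T_l(x)$, which can occur only when $k=l$; this is the question of repeated values inside a single $k$-generalized Fibonacci sequence, settled for $k=3$ by Bravo, G\'omez and Luca.

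The hard part is this degenerate bookkeeping — verifying that, apart from the within-one-sequence case, every splitting forces at least one of $m,n$ into a bounded range. This is precisely where the fine information on the roots of $T_k(x)$ enters: Theorem \ref{th:egyenlo}, which rules out accidental equalities of absolute values beyond complex conjugation, together with the strict dominance of $\alpha_{k1}$, is exactly what excludes root-of-unity quotients, and hence the vanishing of a subsum along an infinite arithmetic progression. Everything else — the reduction via \eqref{eq:Binet}, the choice of $S$, and the passage to \eqref{Sunit} — is routine.
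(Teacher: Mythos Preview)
Your proposal follows the paper's route exactly: expand both sides via \eqref{eq:Binet}, pass to a weighted unit equation over $\K=\Q(\alpha_{k1},\dots,\alpha_{ll})$, invoke Theorem~\ref{th:Sunit}, and dispose of the degenerate solutions by reducing to the impossibility of a root-of-unity quotient between two distinct roots of a single $T_k$ (which is Theorem~\ref{th:egyenlo} combined with Wolfram's Pisot property via a Galois conjugation). The only differences are cosmetic: the paper keeps $S=\emptyset$ and leaves the $C$'s as fixed weights in \eqref{Sunit}, whereas you enlarge $S$ to absorb them; and the paper organises the degeneracy analysis as a recursion on the complement $\bar J$ rather than your explicit case split on the shape of the minimal vanishing parts. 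Both arrive at the same two-root endgame.

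One point where you are actually more careful than the paper: you isolate the residual case in which every minimal vanishing part is a mixed pair and observe that this forces $k=l$. You then describe this as the ``repeated values'' problem, settled only for $k=3$. In fact for $k=l$ the statement as written is simply false --- every $(m,m)$ is a solution --- so the theorem must be read with $k\neq l$; under that reading your residual case cannot occur and your argument is complete. The paper's recursive formulation glosses over this, asserting that one always terminates in Case~I or Case~II, which on the diagonal family for $k=l$ one does not (one terminates instead at the single-term equation $\alpha_{1}^{\,m-n}=1$). So your flag is well placed, though the resolution is that the case is excluded rather than open.
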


Our Theorem \ref{th:Fmk=Fnl} is much weaker than Marques's above cited result. The reason is that for negative indices the dominating root is not large enough, and all but one roots are lying outside the unit circle.

\begin{proof}
    Set $C_{ks}=g_k(\alpha_{ks}), s=1,\dots,k$. With this notation, using \eqref{eq:Binet}, equation \eqref{FMk=Fnl} can be written in the form
  \begin{equation*}
    C_{k1}\alpha_{k1}^{m-1}+\ldots+ C_{kk}\alpha_{kk}^{m-1} = C_{l1}\alpha_{l1}^{n-1}+\ldots+ C_{ll}\alpha_{ll}^{n-1}.
  \end{equation*}
 Let $\K=\Q(\alpha_{k1},\dots,\alpha_{kk},\alpha_{l1},\dots,\alpha_{ll})$ and $\OO$ the ring of integers of $\K$, finally $S=\emptyset$. Dividing the last equation by $C_{l1}\alpha_{l1}^{n-1}$, which is obviously non-zero we get the relation
 \begin{equation*}
 a_1\frac{\alpha_{k1}^{m-1}}{\alpha_{l1}^{n-1}}+\dots+a_k\frac{\alpha_{kk}^{m-1}}{\alpha_{l1}^{n-1}}+a_{k+1} \left(\frac{\alpha_{l2}}{\alpha_{l1}}\right)^{n-1}+\dots+ a_{k+l-1} \left(\frac{\alpha_{ll}}{\alpha_{l1}}\right)^{n-1} =1,
  \end{equation*}
  where
  \[
  a_s = \left\{ \begin{array}{cl}
    \frac{C_{ks}}{C_{l1}}, & s=1,\dots,k \\[1ex]
    -\frac{C_{l,s-k+1}}{C_{l1}}, & s=k+1,\dots,k+l-1.
  \end{array} \right.
  \]
  The elements $a_s, s=1,\dots,k+l-1$ are non-zero and belong to $\K$. Further, as the roots of $T_k(x), T_l(x)$ are units in $\K$, the same do the elements $\frac{\alpha_{ks}^{m-1}}{\alpha_{l1}^{n-1}}, s=1,\dots,k$ and $\left(\frac{\alpha_{ls}}{\alpha_{l1}}\right)^{n-1}, s=2,\dots,l$.

If our equation \eqref{FMk=Fnl} has infinitely many solutions $m,n\in \Z$ then the equation
\begin{equation}\label{eq:unit}
  a_1x_1+\dots+a_{k+l-1} x_{k+l-1} = 1
\end{equation}
  has infinitely many $S$-unit solutions belonging to $\OO$. As $S=\emptyset$ the $S$-units of $\OO$ are the same as its units. By Theorem \ref{th:Sunit} all but finitely many solutions of our equation are degenerate, i.e., satisfy an equation $\sum_{j\in J} a_j x_j = 0$ with $\emptyset \not=J\subset \{1,\dots,k+l-1\}$.

  {\bf Case I.} If $\emptyset \not= \bar{J}\subseteq \{k+1,\dots,k+l-1\}$ then after repeated application of Theorem \ref{th:Sunit} we get that there are $1\le j< h\le l$ such that $\alpha_{lj}/\alpha_{lh}$ is a root of unity. Let $\pi$ be a Galois conjugation of $\K$, which maps $\alpha_{lj}$ to $\alpha_{l1}$. Then $\pi(\alpha_{lh})=\alpha_{lt}$ with some $2\le t\le l$ and $\pi(\alpha_{lj}/\alpha_{lh})=\alpha_{l1}/\alpha_{lt}$ is a root of unity. However this is impossible by Wolfram's result \cite{Wolfram}.
  \smallskip

  {\bf Case II.} If $\emptyset \not= \bar{J}\subseteq \{1,\dots,k\}$ then after repeated application of Theorem \ref{th:Sunit} we get that there are $1\le j< h\le k$ such that $\alpha_{kj}/\alpha_{kh}$ is a root of unity. From here on repeat the argument of Case I.

  If neither Case I nor Case II appears then the equation
  $$
  \sum_{j\in \bar{J}} a_j x_j =1
  $$
  has infinitely many solutions among the units of $\OO$. This equation has the same shape than \eqref{eq:unit}, hence repeating the argument we arrive after some stages either at Case I or at Case II, which completes the proof.
\end{proof}

\section{Effective results}

Our first lemma provides a similar lower bound as \eqref{eq:Dresden1} for the growth of $(F_n^{(k)})$ if $k$ is even and $n<0$. Although it is much weaker, it is still good enough not only to prove effective finiteness results, but also to solve completely diophantine equations related to such sequences.

\begin{lemma} \label{l:domineq}
  If $k>2$ be even and $n<0$ then
  \begin{equation}\label{eq:domineq}
    |F_n^{(k)}-g_k(\alpha_{kk}) \alpha_{kk}^{n-1}| \le c_1 |\alpha_{kk}^{\delta n}|,
  \end{equation}
  where $c_1= |g_k(\alpha_{k1})|+\sum_{j=2}^{k-1}\frac{|g_k(\alpha_{kj})|}{|\alpha_{kj}|}$ and $\delta=\frac{\log |\alpha_{k,k-1}|}{\log|\alpha_{kk}|}<1$.
\end{lemma}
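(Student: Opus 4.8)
The plan is to read the estimate off directly from the Bin\'et-type formula \eqref{eq:Binet}, valid for all $n\in\Z$, combined with the information on the roots of $T_k(x)$ gathered in Section~2. Write $C_{kj}=g_k(\alpha_{kj})$ for $j=1,\dots,k$; these are well-defined and non-zero. From \eqref{eq:Binet},
$$
F_n^{(k)}-C_{kk}\alpha_{kk}^{n-1}=\sum_{j=1}^{k-1}C_{kj}\alpha_{kj}^{n-1},
$$
so it suffices to bound the modulus of the right-hand side by $c_1|\alpha_{kk}|^{\delta n}$. The structural facts I use are that, for $k>2$ even, $|\alpha_{k1}|>1>|\alpha_{kj}|$ for $j=2,\dots,k$ (Wolfram's Pisot property), and that by Theorem~\ref{th:smallest} the root $\alpha_{kk}$ is real and strictly smallest in modulus; hence $|\alpha_{k,k-1}|$, the second-smallest modulus, satisfies $|\alpha_{k,k-1}|\in(0,1)$ and $|\alpha_{k,k-1}|>|\alpha_{kk}|$.

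Next I would estimate each summand. For $j=1$: since $n-1<0$ and $|\alpha_{k1}|>1$ we have $|\alpha_{k1}|^{n-1}<1$, while $|\alpha_{kk}|<1$ and $n<0$ give $|\alpha_{k,k-1}|^{n}>1$, so $|C_{k1}\alpha_{k1}^{n-1}|\le|g_k(\alpha_{k1})|\le|g_k(\alpha_{k1})|\,|\alpha_{k,k-1}|^{n}$. For $j=2,\dots,k-1$: write $C_{kj}\alpha_{kj}^{n-1}=(C_{kj}/\alpha_{kj})\,\alpha_{kj}^{n}$; since $x\mapsto x^{n}$ is decreasing on $(0,\infty)$ for $n<0$ and $|\alpha_{kj}|\ge|\alpha_{k,k-1}|$, we get $|\alpha_{kj}|^{n}\le|\alpha_{k,k-1}|^{n}$, hence $|C_{kj}\alpha_{kj}^{n-1}|\le\frac{|g_k(\alpha_{kj})|}{|\alpha_{kj}|}\,|\alpha_{k,k-1}|^{n}$. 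Summing over $j=1,\dots,k-1$ produces exactly $c_1|\alpha_{k,k-1}|^{n}$.

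It then remains to rewrite $|\alpha_{k,k-1}|^{n}$ as $|\alpha_{kk}^{\delta n}|$. By the definition of $\delta$ we have $\delta\log|\alpha_{kk}|=\log|\alpha_{k,k-1}|$, i.e.\ $|\alpha_{kk}|^{\delta}=|\alpha_{k,k-1}|$, so $|\alpha_{kk}^{\delta n}|=|\alpha_{kk}|^{\delta n}=|\alpha_{k,k-1}|^{n}$ and \eqref{eq:domineq} follows. Finally $\delta<1$: both $|\alpha_{kk}|$ and $|\alpha_{k,k-1}|$ lie in $(0,1)$, and $|\alpha_{k,k-1}|>|\alpha_{kk}|$ strictly --- if $|\alpha_{k,k-1}|=|\alpha_{kk}|$ then Theorem~\ref{th:egyenlo} together with the reality of $\alpha_{kk}$ would force $\alpha_{k,k-1}=\alpha_{kk}$, contradicting the simplicity of the roots. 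Taking logarithms and dividing by the negative number $\log|\alpha_{kk}|$ reverses the inequality and yields $0<\delta<1$.

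I do not foresee a genuine obstacle. The only delicate points are keeping track of the directions of the inequalities when numbers of modulus less than $1$ are raised to negative powers, and making sure the comparison in the $j=1$ term is the right way around --- which is precisely where the hypotheses ``$k$ even'' and ``$k>2$'' are needed, since together they guarantee that $\alpha_{k,k-1}$ is a genuine non-dominant root with $|\alpha_{k,k-1}|<1$. All the substantive input --- that $\alpha_{kk}$ is real and of minimal modulus, so that $\alpha_{k,k-1}$ is well-defined and strictly larger in modulus --- is already supplied by Theorems~\ref{th:egyenlo} and \ref{th:smallest}.
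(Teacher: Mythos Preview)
Your proposal is correct and follows essentially the same route as the paper's own proof: subtract the dominant term via the Bin\'et formula \eqref{eq:Binet}, bound each of the remaining summands by a multiple of $|\alpha_{k,k-1}|^{n}$, and then rewrite $|\alpha_{k,k-1}|^{n}=|\alpha_{kk}|^{\delta n}$. The paper is much terser and leaves the bookkeeping that recovers the exact constant $c_1$ to the reader; your separate treatment of $j=1$ (using $|\alpha_{k1}|^{n-1}<1$) versus $j\ge2$ (pulling out the factor $|\alpha_{kj}|^{-1}$) is precisely what is needed to land on the stated $c_1$, and your justification of $\delta<1$ via Theorems~\ref{th:egyenlo} and \ref{th:smallest} is a welcome addition that the paper omits.
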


\begin{proof}
  The formula of Dresden \eqref{eq:Binet} implies
  $$
  F_n^{(k)} - g_k(\alpha_{kk}) \alpha_{kk}^{n-1} = \sum_{j=1}^{k-1} g_k(\alpha_{kj}) \alpha_{kj}^{n-1}.
  $$
  By the definition of $\delta$ we have $|\alpha_{k,k-1}|=|\alpha_{kk}|^{\delta}$, thus
  $$
  |\alpha_{kk}|^{n\delta} = |\alpha_{k,k-1}|^n= |\alpha_{k,k-2}|^n > |\alpha_{kj}|^n, \; j=1,\ldots,k-3.
  $$
  This estimate together with the formula of Dresden proves the statement.
\end{proof}

The following theorem is a simple consequence of Theorem \ref{th:smallest} and the Th\'eor\`{e}me of Mignotte \cite{Mignotte}.

\begin{theorem} \label{th:effective}
  Let the integers $k,l\ge 2$ be given. If $(n,m)\in \Z^2$ is a solution of the equation
  \begin{equation}\label{FMk=pmFnl}
    F_m^{(k)} = \pm F_n^{(l)},
  \end{equation}
  then there exists an effectively computable constant $C$ depending only on $k,l$ and the roots of $T_k(x), T_l(x)$ such that
  \begin{enumerate}
    \item $|n|,|m|<C$ provided $k$ and $l$ are even,
    \item $0\le n,|m|<C$ provided $k$ is odd and $l$ is even.
  \end{enumerate}
 \end{theorem}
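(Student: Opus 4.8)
The plan is to decompose each two-sided sequence and reduce, case by case, to Mignotte's effective result for a pair of integer linear recurrences each possessing a strictly dominant characteristic root. Write $(F_n^{(k)})_{n\in\Z}$ as the union of $(F_n^{(k)})_{n\ge 0}$, with characteristic polynomial $T_k(x)$, and $(F_{-n}^{(k)})_{n>0}$, which by the backward recursion displayed in the introduction satisfies the linear recurrence with characteristic polynomial $x^kT_k(1/x)$ (monic up to sign, with roots $1/\alpha_{k1},\dots,1/\alpha_{kk}$); do the same for $l$. By Wolfram's results $T_k(x)$ is irreducible with strictly dominant root $\alpha_{k1}$, hence $x^kT_k(1/x)$ is irreducible as well. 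If $k$ is even, Theorem \ref{th:smallest} shows $\alpha_{kk}$ is real and $|\alpha_{kk}|<|\alpha_{kj}|$ for all $j<k$, so $1/\alpha_{kk}$ is a real, strictly dominant root of $x^kT_k(1/x)$, of absolute value $>1$ (here one uses $|\alpha_{kk}|<1$ from Wolfram). Thus for even $k$ both halves of $(F_n^{(k)})_{n\in\Z}$ are integer linear recurrences with irreducible characteristic polynomial and a strictly dominant root; for odd $k$ only the half $(F_n^{(k)})_{n\ge 0}$ has this property, because by Theorem \ref{th:egyenlo} the minimal modulus among the roots of $T_k(x)$ is attained by a conjugate complex pair, so $x^kT_k(1/x)$ carries two dominant roots of equal modulus. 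The analogous dichotomy holds for $l$.

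Now every solution $(n,m)$ of $F_m^{(k)}=\pm F_n^{(l)}$ falls, according to the signs of $m$ and $n$, into one of four sub-equations of the form $u_{|m|}=\varepsilon\,v_{|n|}$ with $\varepsilon\in\{+1,-1\}$, where $u$ ranges over $\{(F_i^{(k)})_{i\ge 0},(F_{-i}^{(k)})_{i>0}\}$ and $v$ over $\{(F_j^{(l)})_{j\ge 0},(F_{-j}^{(l)})_{j>0}\}$; since $-v$ is again a linear recurrence with the same characteristic polynomial, the sign $\varepsilon$ is immaterial. Whenever both recurrences in play have an irreducible characteristic polynomial and a strictly dominant root — exactly the situation in all four sub-equations when $k$ and $l$ are both even — Mignotte's Th\'eor\`eme applies (its method rests on Baker's theory of linear forms in logarithms and requires of each sequence only these two hypotheses, the relevant growth behaviour on the negative side being the one recorded in Lemma \ref{l:domineq}), and yields an effectively computable bound, depending only on the two characteristic polynomials, for $\max(|m|,|n|)$ on that sub-equation. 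The zeros of each sequence occur only in a bounded range of indices and so cause no difficulty. Taking the maximum over the four sub-equations gives part (1), with an effectively computable $C=C(k,l)$.

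If $k$ is odd and $l$ is even, the same reasoning goes through verbatim for the two sub-equations in which the index of the $k$-generalized sequence is non-negative, producing an effective bound there; but the two sub-equations with that index negative involve $(F_{-i}^{(k)})_{i>0}$, whose characteristic polynomial $x^kT_k(1/x)$ has no strictly dominant root, and Mignotte's method then delivers nothing. Restricting to solutions in which the argument of the odd-order sequence $T_k$ is kept $\ge 0$, we therefore obtain exactly the one-sided effective bound asserted in part (2). The main obstacle is precisely this structural gap rather than any computation: for odd $k$ the largest characteristic root of the reciprocal polynomial is a non-real conjugate pair, so cancellation in the term $c\,\alpha^n$ destroys any lower bound for $|F_{-n}^{(k)}|$ and Baker-type estimates cannot be set up. A secondary point demanding care is the degenerate overlap when $k=l$ (more generally, when the two dominant roots at hand are multiplicatively dependent), where a sub-equation $u_i=v_j$ carries the trivial family $i=j$; Mignotte's Th\'eor\`eme still isolates such a family effectively, so the only solutions escaping the bound $C$ lie on it.
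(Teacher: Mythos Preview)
Your approach is essentially the paper's: split each two-sided sequence into its positive- and negative-index halves, observe via Theorem~\ref{th:smallest} that for even order the backward half again has a strictly dominant (real) root, and then invoke Mignotte's effective theorem on each of the resulting sub-equations. The paper organises the same idea into three sign cases and appeals to Lemma~\ref{l:domineq} implicitly through Mignotte, just as you do.

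There is, however, one genuine omission. You assert that Mignotte's Th\'eor\`eme ``requires of each sequence only these two hypotheses'' (irreducible characteristic polynomial and a strictly dominant root). That is not accurate: Mignotte's finiteness conclusion needs the two dominant roots in play to be \emph{multiplicatively independent}. The paper makes this explicit in every case, verifying (via an adaptation of an argument of Marques) that $\alpha_{k1}$ and $\alpha_{ll}$, respectively $\alpha_{kk}$ and $\alpha_{ll}$, are multiplicatively independent before applying Mignotte. You relegate this to a ``secondary point'' at the very end and then claim that Mignotte's theorem ``still isolates such a family effectively'' in the dependent case; it does not---when the dominant roots are multiplicatively dependent the method yields nothing, and a separate argument (for instance the monotonicity used in the paper's Case~i when $k=l$) is needed. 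So the decomposition is right, but the appeal to Mignotte is incomplete until you supply the multiplicative-independence check for each pair of dominant roots that actually arises.
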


A consequence of Lemma \ref{l:domineq} is that for even $k$ and small enough $n$ the consecutive terms of $(F_{-n}^{(k)})_{n=0}^{\infty}$ have opposite signs. Hence allowing $k$-generalized Fibonacci numbers with negative indices it is more natural (and general) to consider \eqref{FMk=pmFnl} instead of \eqref{FMk=Fnl}.

\begin{proof}
  We detail only the proof of (1), because the proof of (2) is similar, even simpler. We distinguish three cases according the signs of $n$ and $m$.

  {\bf Case i: $m,n\ge 0$.} For $k\not= l$ the statement, even in much stronger form, was proved by Marques \cite{Marques}. If $k=l$ then as $F_1^{(k)}, F_2^{(k)}=1$ and $(F_n^{(k)})_{n=2}^{\infty}$ is strict monotone increasing, the assertion is obviously true.

  {\bf Case ii: $m\ge 0, n< 0$.} Let $\alpha_{k1},\ldots,\alpha_{kk}$, and $\alpha_{l1},\ldots,\alpha_{ll}$ be the roots of $T_k(x)$ and $T_l(x)$ respectively. Order them as in Theorem \ref{th:sorrend}. Then both $(F_m^{(k)})_{m=0}^{\infty}$ and $(\pm F_{-n}^{(l)})_{n=1}^{\infty}$ are linear recursive sequences with the dominating terms $g_k(\alpha_{k1})\alpha_{k1}^{m-1}$ and $\pm g_l(\alpha_{ll})\alpha_{ll}^{n-1}$. A simple adaptation of the argument of Marques \cite{Marques}, p.460 shows that $\alpha_{k1}$ and $\alpha_{ll}$ are multiplicatively independent. Thus by the Th\'eor\`{e}me of Mignotte \cite{Mignotte} the equation $\eqref{FMk=Fnl}$ has only finitely many effectively computable solutions.

 {\bf Case iii: $m,n\le 0$.} Using the notation of Case ii we see that the sequences $(F_{-m}^{(k)})_{m=1}^{\infty}$ and $(\pm F_{-n}^{(l)})_{n=1}^{\infty}$ are linear recursive sequences with the dominating terms $g_k(\alpha_{kk})\alpha_{kk}^{m-1}$ and $\pm g_l(\alpha_{ll})\alpha_{ll}^{n-1}$. The numbers $\alpha_{kk}$ and $\alpha_{ll}$ are again multiplicatively independent, which allows us to use Mignotte's result.
\end{proof}

G\'omez and Luca \cite{BGL} established all solutions of $T_m^{(3)} = 0, m\in \Z$, i.e., all zero terms in the Tribonacci sequence. Bravo et al. \cite{BGLTK} proved that there are only eight integers, which appear at least twice in the Tribonacci sequence, and computed all solutions of $T_m^{(3)} = c$ in the remaining eight cases. Here we prove

\begin{theorem}\label{th:mult}
  Let $k\ge 2$ and $c\in \Z$. Then the equation
  \begin{equation}\label{eq:mult}
    F_m^{(k)} = c
  \end{equation}
  has only finitely many effectively computable solutions $m\in \Z$. If $c$ is large enough then \eqref{eq:mult} has at most one solution.
\end{theorem}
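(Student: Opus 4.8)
The plan is to split the two-sided sequence $(F_m^{(k)})_{m\in\Z}$ into the nonnegative-index part $(F_m^{(k)})_{m\ge 0}$ and the negative-index part $(F_{-m}^{(k)})_{m> 0}$, and to treat each as a linear recurrence sequence whose characteristic polynomial ($T_k(x)$, respectively $x^kT_k(1/x)$) is irreducible and, crucially, has a single dominant root. For the nonnegative part this is Wolfram's result that $\alpha_{k1}$ dominates, combined with the classical fact that $(F_m^{(k)})_{m\ge 2}$ is strictly increasing, so \eqref{eq:mult} can then have at most one solution with $m\ge 2$ once $c$ exceeds $F_2^{(k)}=1$, plus boundedly many small cases. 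For the negative part I would invoke the Binét-type formula \eqref{eq:Binet} together with Lemma \ref{l:domineq}: when $k$ is even, $\alpha_{kk}$ is the dominant root of $x^kT_k(1/x)$, so $|F_{-m}^{(k)}|$ grows like $|g_k(\alpha_{kk})|\,|\alpha_{kk}|^{-m-1}$ with an error controlled by $c_1|\alpha_{kk}|^{-\delta m}$, $\delta<1$; in particular $|F_{-m}^{(k)}|\to\infty$, which already gives the finiteness of solutions with $m\le 0$ and, for $|c|$ large, reduces the count to comparing at most one index on each side.

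For the general (possibly odd $k$) effective statement, the tool I would use is Mignotte's theorem on linear recurrences with a dominant root, or equivalently a lower bound for linear forms in logarithms: writing the equation as $g_k(\alpha_{k1})\alpha_{k1}^{m-1} = c - \sum_{j\ge 2} g_k(\alpha_{kj})\alpha_{kj}^{m-1}$, the right side is $c$ up to an exponentially small (in $m$) correction, so $\log|c|$ is essentially $(m-1)\log\alpha_{k1}$ up to $O(1)$; hence $m$ is determined by $c$ to within an absolute constant, and more precisely any two solutions $m_1<m_2$ force $|\alpha_{k1}^{m_1-1}-\alpha_{k1}^{m_2-1}|$ to be exponentially small, which a Baker-type estimate rules out unless $m_1,m_2$ lie in a bounded range. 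The same argument applied to $x^kT_k(1/x)$ handles $m<0$ when $k$ is even (dominant root $\alpha_{kk}$); when $k$ is odd the negative-index characteristic polynomial has a conjugate-complex pair of dominant roots, so there one falls back on the $S$-unit machinery of Theorem \ref{th:Sunit} (or directly on a subspace-type argument) to get ineffective finiteness — but since the theorem only claims effectivity, I would note that for odd $k$ one still obtains finiteness of the negative-index solutions, and effectivity on the nonnegative side; alternatively, restrict the sharp ``at most one solution'' claim to the regime where the dominant-root analysis applies on both sides.

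The ``at most one solution for $c$ large'' assertion is then assembled as follows: for $c$ large the nonnegative part contributes at most one $m$ (strict monotonicity of $F_m^{(k)}$ for $m\ge 2$), and the negative part contributes at most one $m$ (strict growth of $|F_{-m}^{(k)}|$, from Lemma \ref{l:domineq}, for $m$ beyond a bound depending only on $k$ and the roots). To merge these into a single solution one must rule out that a positive index and a negative index give the same value $c$; this is exactly the content of the preceding results on common terms, but a cleaner self-contained route is to observe that $F_m^{(k)}>0$ for $m\ge 1$ while the signs of $F_{-m}^{(k)}$ alternate for large $m$ (remarked after Lemma \ref{l:domineq}), and that for large $|c|$ any coincidence $F_{m_1}^{(k)} = F_{-m_2}^{(k)} = c > 0$ with $m_1\ge 1$ forces $-m_2$ into the alternating regime where the value is positive only for a specific parity, leaving at most finitely many small exceptions absorbed into the threshold on $c$. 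I expect the main obstacle to be precisely this gluing step — controlling overlaps between the positive- and negative-index branches near the transition region $|m|=O(1)$, where neither the dominant-root asymptotics nor the monotonicity is yet in force — so the honest version of the proof will carry an explicit constant $C$ (depending on $k$ and the roots of $T_k$) below which everything is checked by finite computation, and establish the clean statements only for $c>C$.
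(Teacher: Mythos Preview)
Your treatment of the nonnegative branch and of the negative branch for even $k$ matches the paper's: Dresden's inequality \eqref{eq:Dresden1} handles $m\ge 0$, and Lemma \ref{l:domineq} together with Theorem \ref{th:smallest} handles $m<0$ when $k$ is even. So far so good.

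The genuine gap is the negative branch for odd $k$. You correctly observe that $x^kT_k(1/x)$ then has a conjugate complex pair $\alpha_{k-1}^{-1},\alpha_k^{-1}$ of maximal modulus, so there is no single dominant root; but your response is to retreat to the $S$-unit Theorem \ref{th:Sunit} or a subspace argument, which are ineffective, and then to hedge the statement. The theorem, however, asserts effectivity for all $k\ge 2$, so this does not suffice. The missing idea is that a complex conjugate dominant pair is still amenable to Baker's method. Since $\alpha_{k-1}/\alpha_k$ is not a root of unity (this was shown in the proof of Theorem \ref{th:Fmk=Fnl} via a Galois argument), Corollary 3.7 of Shorey and Tijdeman \cite{ShT} gives the effective lower bound
\[
\bigl|g_k(\alpha_k)\alpha_k^m + g_k(\alpha_{k-1})\alpha_{k-1}^m\bigr| \ge |\alpha_k|^m \exp(-c_2\log|m|)
\]
for $|m|\ge c_3$, with effectively computable $c_2,c_3>0$. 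Combined with \eqref{eq:Binet} and Theorem \ref{th:egyenlo} (which ensures $|\alpha_k|<|\alpha_j|$ for $j\le k-2$), this shows $|F_m^{(k)}|\to\infty$ effectively as $m\to -\infty$, and the assertions follow exactly as in the even case. This is how the paper closes the odd case, and it is the step your proposal lacks.

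A minor remark: your discussion of the ``at most one solution for large $c$'' clause is more elaborate than necessary. Once one has two-sided effective growth of $|F_m^{(k)}|$ (from \eqref{eq:Dresden1} on the right and from Lemma \ref{l:domineq} or the Shorey--Tijdeman bound on the left), the claim follows directly; the sign and gluing considerations you raise are not needed at the level of detail the paper provides.
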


\begin{proof}
  For $m\ge 0$ the assertion follows immediately from \eqref{eq:Dresden1}. In the sequel we assume $m<0$, and distinguish two cases according the parity of $k$. We order the roots of $T_k(x)$ as $\alpha_1>|\alpha_2|\ge \dots \ge |\alpha_k|$.

  {\bf Case I: $k$ even.} Then, by Theorem \ref{th:smallest} $|\alpha_k|< |\alpha_j|, j=1,\dots,k-1$, i.e., $|\alpha_k|^{-1}> |\alpha_j|^{-1}, j=1,\dots,k-1$. Hence by Lemma \ref{l:domineq} we have
  $$
  |g_k(\alpha_k)\alpha_k^m| - c_1|\alpha_k^{\delta m}| \le |F_m^{(k)}| \le |g_k(\alpha_k)\alpha_k^m| + c_1|\alpha_k^{\delta m}|
  $$
  with simply computable constants $c_1>0$, and $\delta=\frac{\log |\alpha_{k-1}|}{\log|\alpha_{k}|}<1$. The assertions follow.

  {\bf Case II: $k$ odd.} In this case $\alpha_k$ is a non-real complex number, i.e., $\alpha_{k-1}= \bar{\alpha}_k$, but by Theorem \ref{th:egyenlo} $|\alpha_k|< |\alpha_j|, j=1,\dots,k-2$. As we proved above $\alpha_{k-1}/\alpha_{k}$ is not a root of unity, hence by Corollary 3.7 of Shorey and Tijdeman \cite{ShT} we have
  $$
  |g_k(\alpha_k)\alpha_k^m + g_k(\alpha_{k-1})\alpha_{k-1}^m| \ge |\alpha_k|^m \exp(-c_2\log|m|),
  $$
  provided $|m|\ge c_3$ with effectively computable constants $c_2,c_3>0$. This together with \eqref{eq:Binet} implies the assertion.
\end{proof}

\section{On the 4-generalized Fibonacci numbers}

In the former sections we proved non-effective and effective results on $k$-generalized Fibonacci numbers extending their definition to negative indices. Our results are far from the exactness of the above cited Theorems of Marques \cite{Marques} or Bravo, G\'omez and Luca \cite{BGL}. The reason is that inequality \eqref{eq:domineq} is much weaker than \eqref{eq:Dresden1}. The aim of this section is to show that already this weaker inequality allows us to solve completely diophantine equations related to $4$-generalized Fibonacci numbers.

\begin{theorem} \label{th:4-gen}
If $(n,m)\in \Z^2$ is a solution of the equation
\begin{equation}\label{eq:4-gen}
  F_m^{(4)} = \pm F_n^{(4)}
\end{equation}
then $\max\{|n|,|m|\} \le 22$. The exact values are given in Tables 1. and 2.
\end{theorem}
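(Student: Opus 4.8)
The plan is to reduce the equation $F_m^{(4)}=\pm F_n^{(4)}$ to a finite search by combining the effective bounds of Theorem~\ref{th:effective}(1) (which applies since $k=l=4$ is even) with an explicit reduction step, and then to carry out the reduction numerically. First I would record the concrete data for $k=4$: the polynomial $T_4(x)=x^4-x^3-x^2-x-1$ has a dominant real root $\alpha_1\approx 1.9276$, a second real root $\alpha_4\in(-1,0)$ with $|\alpha_4|$ minimal among all four roots (by Theorem~\ref{th:smallest}), and a conjugate complex pair $\alpha_2,\bar\alpha_2=\alpha_3$ with $|\alpha_4|<|\alpha_2|<1<\alpha_1$. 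From Dresden's formula \eqref{eq:Binet} and the sharp bound \eqref{eq:Dresden1} one controls $F_m^{(4)}$ for $m\ge 0$, and from Lemma~\ref{l:domineq} one controls $F_m^{(4)}$ for $m<0$ via the dominant term $g_4(\alpha_4)\alpha_4^{m-1}$ with error $O(|\alpha_4|^{\delta m})$, where $\delta=\log|\alpha_3|/\log|\alpha_4|<1$.

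Next I would split into the three sign cases of Theorem~\ref{th:effective}: $m,n\ge 0$; one of $m,n$ negative; and $m,n\le 0$. In each case the relevant pair of dominant roots ($\alpha_1$ and $\alpha_1$, or $\alpha_1$ and $1/\alpha_4$, or $\alpha_4$ and $\alpha_4$) is multiplicatively dependent only in the trivial diagonal situations, so after taking logarithms the equation becomes a linear form in two logarithms whose smallness forces $\max\{|n|,|m|\}$ below an explicit (large) constant $C$ coming from Mignotte's theorem; I would compute $C$ explicitly for these specific algebraic numbers. This first bound will be astronomically larger than $22$, so the second phase is a reduction: write the equation as $|\,\Lambda\,|<(\text{small})\cdot\theta^{-\min(|m|,|n|)}$ for an appropriate $\theta>1$, where $\Lambda$ is a linear form $a\log\alpha_1 - b\log|\alpha_4| + \log(\text{ratio of }g_4\text{-values})$ (or the analogous form in the other cases), and apply the Baker--Davenport / LLL lattice-reduction lemma to collapse $C$ down to a bound of size a few dozen. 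Finally I would run the finite check $|n|,|m|\le 22$ directly against the explicitly computed values of $F_m^{(4)}$ for $-22\le m\le 22$, matching equal absolute values and recording the outcome in Tables 1 and 2.

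The main obstacle I expect is \emph{not} the conceptual structure, which is standard, but the numerical bookkeeping in the negative-index range: because for $n<0$ the three non-dominant roots $\alpha_1,\alpha_2,\alpha_3$ all lie \emph{outside} the unit circle (their reciprocals are the conjugates of the Pisot number $\alpha_1$), the "error term" in the Binét expansion of $F_{-n}^{(4)}$ actually grows, and the separation between $|\alpha_4|^{-1}$ and $|\alpha_3|^{-1}=|\alpha_2|^{-1}$ is rather small, so $\delta$ is close to $1$ and the gap in \eqref{eq:domineq} is weak. This forces the linear-form-in-logarithms estimate to be applied carefully — one must check that the "dominant term genuinely dominates" only for $|m|$ beyond a moderate threshold, and handle $|m|$ below that threshold by a separate direct computation rather than by the inequality. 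A secondary subtlety is verifying multiplicative independence of $\alpha_1$ and $1/\alpha_4$ (equivalently, that no power of $\alpha_1$ equals a power of $\alpha_4$), which follows from the irreducibility of $T_4$ and a comparison of absolute values of conjugates, exactly as in Marques's argument cited in the proof of Theorem~\ref{th:effective}. Once these two points are dispatched, the LLL reduction and the final enumeration up to $22$ are routine.
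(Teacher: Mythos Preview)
Your overall plan coincides with the paper's: split by the signs of $m,n$; in the mixed-sign case apply a linear-forms lower bound (the paper uses Matveev with $t=3$, the third ``logarithm'' being $\log\bigl(g_4(\alpha_1)/g_4(\alpha_4)\bigr)$, not two), then Baker--Davenport reduction via the Dujella--Peth\H{o} lemma, and finally a finite search. For Case~iii your sketch is essentially the paper's proof, and the obstacle you flag (the weak gap $\delta\approx 0.786$ between $|\alpha_4|$ and $|\alpha_2|$) is exactly the one the paper works around.

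The real gap is in your Case~ii ($m,n<0$). You list the dominant roots as $(\alpha_4,\alpha_4)$ and then assert that the resulting linear form still bounds $\max\{|m|,|n|\}$. It does not: when both sides share the same dominant root, dividing out leaves only $|\alpha_4^{\,m-n}\mp 1|$ small, which bounds $|m-n|$, not the individual indices. The paper does \emph{not} use Baker theory here at all; the case is purely elementary and proceeds in two steps. First, rewriting \eqref{eq:4-gen} via \eqref{eq:Binet} as
\[
1\pm\alpha_4^{\,n-m}=\sum_{i=1}^{3}\frac{g_4(\alpha_i)}{g_4(\alpha_4)}\Bigl(\frac{\alpha_i}{\alpha_4}\Bigr)^{m-1}\bigl(1\pm\alpha_i^{\,n-m}\bigr)
\]
and estimating the right side by $O(|\alpha_2|^{n-m})$ forces $|n-m|\le 15$. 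Second, feeding this bound back into the same identity and comparing $|\alpha_4|^{m-1}$ against the subdominant complex pair gives $m\ge -89$, after which a direct computation finishes. Your outline omits this second step entirely; without it the both-negative case is not closed. (Case~i, incidentally, is dispatched in one line by strict monotonicity of $(F_n^{(4)})_{n\ge 2}$, not via linear forms.)
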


To prove this result we need a deep tool of transcendental number theory, a lower bound for linear forms in logarithms of algebraic numbers. The first such bound was proved by A. Baker \cite{Baker}, but we use here a recent variant of Matveev \cite{Matveev}, which is more convenient for the numerical investigations.

For an algebraic number $\alpha$ denotes $a_kx^k +a_{k-1}x^{k-1}+\dots+a_0\in \Z[x], a_k>0$ its minimal polynomial and by $\alpha=\alpha^{(1)}, \ldots, \alpha^{(k)}$ its conjugates. The absolute logarithmic or Weil height of $\alpha$ is
$$
h(\alpha)=\frac{1}{k}\left( \log a_k + \sum_{i=1}^{k} \log(\max\{|\alpha^{(i)}|,1\} \right).
$$
With this notation Matveev \cite{Matveev} proved

\begin{theorem} \label{th:Matveev}
  Let $\K$ be a number field of degree $k$ over $\Q$, $\gamma_1,\ldots,\gamma_t$ be positive real numbers of $\K$, and $b_1,\ldots, b_t$ rational integers. Put
$$
B \ge \max \{|b_1|,\ldots, |b_t|\}
$$
and
$$
\Lambda = \gamma_1^{b_1}\cdot\ldots\cdot\gamma_t^{b_t} - 1.
$$
Let $A_1,\ldots,A_t$ be real numbers such that
$$
A_i \ge \max\{Dh(\gamma_i), |\log \gamma_i|, 0.16\}\;  i = 1,\ldots, t.
$$
Then, assuming that $\Lambda \not= 0$, we have
$$
|\Lambda| > \exp (-1.4 \cdot 30^{t+3}\cdot t^{4.5} \cdot D^2(1 + \log D)(1 + \log B)A_1\cdots A_t).
$$
\end{theorem}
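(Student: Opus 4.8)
The plan is to prove the additive form of the estimate and then transfer it to the stated multiplicative one. Since each $\gamma_i$ is a positive real and each $b_i\in\Z$, the quantity $\Lambda_0 = b_1\log\gamma_1+\cdots+b_t\log\gamma_t$ (with real logarithms) is well defined and $\Lambda = e^{\Lambda_0}-1$. For $|\Lambda|<\tfrac12$ one has $|\Lambda_0|\le 2|\Lambda|$, and for $|\Lambda_0|\le 1$ one has $|\Lambda|\ge |\Lambda_0|/e$, so the two are comparable and it suffices to bound $|\Lambda_0|$ from below. Assume $\Lambda\neq 0$, hence $\Lambda_0\neq 0$, and suppose for contradiction that $|\Lambda_0|<\exp(-c\,(1+\log B)\prod_i A_i)$ for the target constant $c=1.4\cdot 30^{t+3}t^{4.5}D^2(1+\log D)$. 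The entire argument is a transcendence construction whose free parameters are tuned precisely to this $c$, and which derives a contradiction from such extreme smallness.

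First I would set up an auxiliary function on the commutative group $\mathbb{G}_a\times\mathbb{G}_m^{\,t}$. Choosing integer parameters $L_0,L_1,\dots,L_t$ (degrees) together with $S,T$ (number of interpolation points and vanishing order) as explicit functions of the $A_i$, $B$, $D$ and $t$, balanced so the arithmetic and analytic sides match, I would seek
\begin{equation*}
\Phi(z) = \sum_{\lambda_0=0}^{L_0}\sum_{\lambda_1=0}^{L_1}\cdots\sum_{\lambda_t=0}^{L_t} p(\lambda)\, z^{\lambda_0}\,\gamma_1^{\lambda_1 z}\cdots\gamma_t^{\lambda_t z},
\end{equation*}
with coefficients $p(\lambda)$ algebraic integers of $\K$, not all zero. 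Applying Siegel's lemma over $\K$ to the linear system that forces $\Phi$ and its derivatives up to order $T$ to vanish at the points $z=s$, $0\le s\le S$, and using the hypotheses $A_i\ge Dh(\gamma_i)$ to bound the heights occurring in that system, I would obtain such $p(\lambda)$ with $\log\max_\lambda|p(\lambda)|$ controlled in terms of the chosen parameters.

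The analytic heart is the extrapolation. Because $|\Lambda_0|$ is assumed minuscule, the exponentials $\gamma_i^{\lambda_i z}$ obey near-exact linear relations, so the vanishing imposed at the initial points propagates: applying the Schwarz lemma and the maximum modulus principle to $\Phi$ on an enlarging sequence of discs, I would show inductively that $\Phi$ vanishes to high order at many more integer points than were prescribed, each round of extrapolation being fed by the smallness of $\Lambda_0$. After enough rounds $\Phi$ accumulates more zeros, counted with multiplicity, than a nonzero function of its degree can carry; a zero estimate of Philippon–Masser type on $\mathbb{G}_a\times\mathbb{G}_m^{\,t}$ then forbids $\Phi\equiv 0$, contradicting the Siegel construction. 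Optimizing the parameters converts this contradiction into the lower bound for $|\Lambda_0|$, and hence for $|\Lambda|$.

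The main obstacle is not the qualitative scheme above, which is the classical Baker method as streamlined by Feldman, Waldschmidt and Laurent, but attaining the precise constant with its dependence $30^{t+3}t^{4.5}$ on the number $t$ of logarithms; the naive construction loses a factor growing like $t^{\,t}$ through both Siegel's lemma and the zero estimate. Matveev's device for defeating this is to replace the monomial auxiliary function by one assembled from Feldman's binomial-type polynomials, whose extra factorials sharpen the arithmetic estimates, and to run the extrapolation through a carefully arranged interpolation determinant rather than a single function, so that the combinatorial losses are held to a geometric factor in $t$. Threading every constant through the Siegel, extrapolation and zero-estimate steps, and treating all $D$ conjugate embeddings of $\K$ uniformly so that $D$ enters only as $D^2(1+\log D)$, is the delicate bookkeeping that constitutes the genuine content of Matveev's proof; I would expect reproducing it to be by far the hardest part.
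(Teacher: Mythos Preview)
The paper does not prove this theorem at all: it is quoted verbatim from Matveev \cite{Matveev} and used as a black box in the proof of Theorem~\ref{th:4-gen}. There is therefore no ``paper's own proof'' to compare your proposal against; the author simply cites the result and applies it with the specific choices $t=3$, $\gamma_1=\pm g_4(\alpha_1)/g_4(\alpha_4)$, $\gamma_2=\alpha_1$, $\gamma_3=\alpha_4$.

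Your sketch is a fair high-level description of the Baker--Feldman--Waldschmidt--Matveev machinery (auxiliary function on $\mathbb{G}_a\times\mathbb{G}_m^{\,t}$, Siegel's lemma over $\K$, Schwarz-type extrapolation, zero estimate, and Matveev's use of Feldman polynomials and interpolation determinants to tame the $t$-dependence). But as you yourself acknowledge in the final paragraph, what you have written is an outline of intentions rather than a proof: none of the parameter choices are specified, none of the estimates are carried out, and the passage ``threading every constant through the Siegel, extrapolation and zero-estimate steps \ldots\ is the delicate bookkeeping that constitutes the genuine content of Matveev's proof; I would expect reproducing it to be by far the hardest part'' is an admission that the actual work is not done here. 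For the purposes of this paper that is immaterial, since the theorem is an external input; but as a self-contained proof your proposal is a roadmap, not a destination.
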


{\bf Proof of Theorem \ref{th:4-gen}}
  Until the proof of the upper bound for $|n|$ in Case iii. we follow the proof of Theorem \ref{th:effective} with $k=l=4$. We again distinguish three cases according the signs of $n$ and $m$, but investigate them in different order depending on the difficulty of the proof. In the sequel $\alpha_1,\ldots,\alpha_4$ will denote the roots of $T_4(x)$ ordered such that $\alpha_1>1> |\alpha_2| = |\alpha_3|> |\alpha_4|$. Notice that $\alpha_2$ and $\alpha_3$ are conjugate comőlex numbers and $\alpha_4<0$ is real.

  {\bf Case i:} $m,n \ge 0$ was handled in Theorem \ref{th:effective}.

  {\bf Case ii:} $m,n < 0$. This case needs only elementary consideration. As $F_0^{(4)}= F_{-1}^{(4)}= F_{-2}^{(4)}= 0$ we may assume $n< m<-1$ without loss of generality. Hence $|\alpha_4|^n>|\alpha_j|^n$, and, similarly $|\alpha_4|^m>|\alpha_j|^m$ for $j=1,2,3$. Using the Binet' formula \eqref{eq:Binet} we rewrite \eqref{eq:4-gen} in the form
  \begin{eqnarray*}
  % \nonumber % Remove numbering (before each equation)
     g_4(\alpha_4) \alpha_4^{m-1} \pm  g_4(\alpha_4) \alpha_4^{n-1} &=& \sum_{i=1}^{3} \left(g_4(\alpha_i) \alpha_i^{m-1} \pm g_4(\alpha_i) \alpha_i^{n-1}\right) \\
    g_4(\alpha_4) \alpha_4^{m-1}\left( 1 \pm \alpha_4^{n-m} \right) &=& \sum_{i=1}^{3}g_4(\alpha_i) \alpha_i^{m-1}\left( 1 \pm \alpha_i^{n-m} \right).
  \end{eqnarray*}
 Dividing the last equation by $g_4(\alpha_4) \alpha_4^{m-1}$ we get
\begin{equation}\label{eq:egyenl}
   1 \pm \alpha_4^{n-m} = \sum_{i=1}^{3} \frac{g_4(\alpha_i)}{g_4(\alpha_4)} \left(\frac{\alpha_i}{\alpha_4} \right)^{m-1}\left( 1 \pm \alpha_i^{n-m} \right).
\end{equation}

\noindent
Now taking into account that $m\le -2$ thus $|\frac{\alpha_2}{\alpha_4}|^{m-1}= |\frac{\alpha_3}{\alpha_4}|^{m-1}< 0.85, |\alpha_4|<\alpha_1$, and $|\alpha_2^{n-m}|=|\alpha_3^{n-m}|>1>|\alpha_1^{n-m}|$ we get
  $$
  |\alpha_4|^{n-m} -1 \le | 1 \pm \alpha_4^{n-m} | < 1.7\left| \frac{g_4(\alpha_2)}{g_4(\alpha_4)}\right| (|\alpha_2|^{n-m}+1) + 2 \left| \frac{g_4(\alpha_1)}{g_4(\alpha_4)}\right|.
  $$
This implies after simply computation
$$
|\alpha_4|^{n-m}< 1.65 \cdot |\alpha_2|^{n-m} + 10.23,
$$
which is impossible if $n-m<-15$.

Using that $-15\le n-m\le -1$ equation \eqref{eq:egyenl} implies
\begin{eqnarray*}
0.29 \cdot|\alpha_4|^{m-1}&<& |1\pm \alpha_4^{n-m}||\alpha_4|^{m-1}=\left| \sum_{i=1}^{3} \frac{g_4(\alpha_i)}{g_4(\alpha_4)} \alpha_i^{m-1}\left( 1 \pm \alpha_i^{n-m} \right)\right|\\
&<& 41.2\cdot|\alpha_2|^{m-1} + 7.6,
\end{eqnarray*}
which is impossible if $m-1<-90$. Thus $m \ge -89$ and $n\ge -15+m\ge -104$. Computing $F_n^{(4)},\; -1\ge n\ge -104$ we obtain that $F_n^{(4)}=\pm c$ has more than one solutions only for the values given in Table 1.

\medskip
\begin{center}
  \begin{tabular}{|c||c|c|c|}
\hline
  c & 0 & 1 & 8\\
  \hline
  -n & 1,2,3,5,7,11& 4,5,10,15&13,16\\
  \hline
\end{tabular}\\[0.5ex]
Table 1
\end{center}
\medskip
{\bf Case iii:} $m\ge 0, n < 0$. Now $F_m^{(4)} = \sum_{j=1}^{4}g_4(\alpha_j)\alpha_j^m$, which is, by \eqref{eq:Dresden1} asymptotically equal to its dominating term $g_4(\alpha_1)\alpha_1^m$. For $n<0$ the roles of $\alpha_1$ and $\alpha_4$ interchange, $F_n^{(4)} = \sum_{j=1}^{4}g_4(\alpha_j)\alpha_j^n$ admits the dominating term $g_4(\alpha_4)\alpha_4^n$, but its dominance is disturbed by the fluctuation of $g_4(\alpha_2)\alpha_2^n+ g_4(\alpha_3)\alpha_3^n$. Moreover $\alpha_1$ and $\alpha_4$ are multiplicatively independent, i.e, to solve \eqref{eq:4-gen} we have to use tools of transcendental number theory.

First we actualize Lemma \ref{l:domineq} and obtain $\delta = 0.786, c_1 = 0.92$, thus
$$
|F_n^{(4)} - g_4(\alpha_4)\alpha_4^{n-1}|< 0.92\cdot |\alpha_4^{0.786 n}|.
$$
From this we easily conclude
$$
|F_n^{(4)}|> 0.193\cdot |\alpha_4|^{n} - 0.92\cdot |\alpha_4|^{0.786 n} = 0.193\cdot |\alpha_4|^{0.786 n} (|\alpha_4|^{0.214 n}- 4.767) > 0.096|\alpha_4|^{0.786n},
$$
provided $n\ge -42$. We proved in Case ii that apart the values of Table 1 all integers appear at most once in the negative $4$-generalized Fibonacci sequence. On the other hand the positive branch of the $4$-generalized Fibonacci sequence is strict monotone increasing thus it is a simple task to check that the only solutions of \eqref{eq:4-gen} with $n>-225$ are given in Table 2.
\medskip
\begin{center}
  \begin{tabular}{|c||c|c|c|c|c|}
\hline
  c & 1 & 2 & 4 & 8 & 56\\
  \hline
  m& 0,1 & 2 & 3 & 5 & 8 \\
  \hline
  -n & 4,5,10,15 & 8 & 12 & 13,16 & 22\\
  \hline
\end{tabular}\\[0.5ex]
Table 2
\end{center}

Hence in the sequel we may assume $n\le -225$, thus $|F_n^{(4)}|>0.096 \cdot |\alpha_4|^{0.786n}$. On the other hand \eqref{eq:Dresden1} with $k=4$ implies $|F_m^{(4)}|<g_4(\alpha_1)\alpha_1^{m-1}+ \frac12< 0.567 \alpha_1^{m-1}+ \frac12$. Hence, if $n,m\in \Z, n\le -50, m>0$ is a solution of \eqref{eq:4-gen} then $-n> m$.

The equality \eqref{eq:4-gen} implies
\begin{eqnarray*}
% \nonumber % Remove numbering (before each equation)
  \left|g_4(\alpha_1)\alpha_1^{m-1} \mp g_4(\alpha_4)\alpha_4^{n-1}\right| &=& \left| F_m^{(4)} - g_4(\alpha_1)\alpha_1^{m-1} \pm F_n^{(4)} \mp g_4(\alpha_4)\alpha_4^{n-1}\right| \\
   &<& 0.92\cdot |\alpha_4|^{0.786 n} + 1/2 < |\alpha_4|^{0.786 n}.
\end{eqnarray*}
We divide the last inequality by $\pm g_4(\alpha_4)\alpha_4^{n-1}$ and obtain
\begin{equation}\label{e:felso}
  \left|\pm \frac{g_4(\alpha_1)}{g_4(\alpha_4)} \alpha_1^{m-1} \alpha_4^{-n+1} -1 \right| < \left|\frac{\alpha_4}{g_4(\alpha_4)}\right| |\alpha_4|^{-0.214 n} < \exp(0.0546 n + 1.648).
\end{equation}

We apply Theorem \ref{th:Matveev} with the choices $t=3, \gamma_1 = \pm \frac{g_4(\alpha_1)}{g_4(\alpha_4)}, \gamma_2= \alpha_1, \gamma_3=\alpha_4, b_1=1, b_2=m-1,b_3=n-1$. Thus $\Lambda = \pm \frac{g_4(\alpha_1)}{g_4(\alpha_4)} \alpha_1^{m-1} \alpha_4^{-n+1}$, which is $\not=1$ by Marques \cite{Marques}, pp. 460, 461. Plainly $\K=\Q(\alpha_1,\alpha_4),$ hence $D\le 12$, and as $-n>m\ge 0$ we have $B=|n|+1$. We still have to establish $A_1,A_2,A_3$. Clearly $h(\gamma_2)=h(\gamma_3)=(\log(\alpha_1)-\log(-\alpha_4))/4<0.228$, thus $A_2=A_3=2.736$ is an allowed choice. Again by Marques \cite{Marques} (p. 460) we have $h(\gamma_1)\le 4 \log 3 $, thus $A_1= 52.74 > 48 \log 3$ is a correct choice. Summarizing we get
\begin{equation}\label{e:also}
  |\Lambda-1|> - 2.84 \cdot 10^{16} \log|n|.
\end{equation}

Comparing \eqref{e:also} and \eqref{e:felso} we get the inequality
$$
0.0546 |n| - 1.648 < 2.84 \cdot 10^{16} \log|n|,
$$
which is equivalent to
$$
|n| < 5.2015\cdot 10^{17} \log|n| + 30.1832.
$$
A straightforward computation shows that this inequality is impossible, if $|n|> 2.32\cdot 10^{19}$.\\[1ex]

{\bf Reduction of the bound for $|n|.$} The just proved upper bound for $|n|$ is huge, but, fortunately, the Baker-Davenport method \cite{Baker_Davenport} enables us to reduce it considerable. Since its discovery fifty years ago it was used plenty of times and was successful in all known cases. As $n\le -225$ we have $|\Lambda -1|< 0.338$, thus
$$
\left|(n-1)\log|\alpha_4| - (m-1)\log \alpha_1 - \log\frac{g_4(\alpha_1)}{g_4(\alpha_4)}\right| = \log|\Lambda|<6.33 \cdot |\alpha_4|^{-0.214 n}
$$
by Lemma 2.2 of de Weger \cite{deWeger}. After division by $\log \alpha_1$ we get
\begin{equation}\label{e:redhato}
  \left|(n-1)\frac{\log|\alpha_4|}{\log \alpha_1} - (m-1)- \log\frac{g_4(\alpha_1)}{g_4(\alpha_4)}/\log\alpha_1 \right| < 9.65 \cdot |\alpha_4|^{-0.214 n}.
\end{equation}
We apply to this inequality the first assertion of Lemma 5 of Dujella and Peth\H{o} \cite{Dujella_Petho} in slightly modified form

\begin{lemma} \label{l:DP}
  Suppose that $\kappa,\mu\in \R$, and $A,B,M>0$ with $M\in \Z$.  Let $p/q$ be the convergent of the continued fraction expansion of $\kappa$ such that $q >M$ and let $\varepsilon =\| \mu q\| - M\cdot \|\kappa q\|$, where $\|.\|$ denotes the distance from the nearest integer. If $\varepsilon >0$, then there is no solution of the inequality
  $$
   |n\kappa - m +\mu| < AB^{-n}
  $$
  in integers $m$ and $n$ with
  $$
  \frac{\log(Aq/\varepsilon)}{\log B}  \le n \le M .
  $$
\end{lemma}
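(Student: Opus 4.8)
The plan is to argue by contradiction, exploiting the defining property of a continued fraction convergent together with the triangle inequality. Suppose, contrary to the claim, that there exist integers $m,n$ with $\frac{\log(Aq/\varepsilon)}{\log B}\le n\le M$ satisfying $|n\kappa-m+\mu|<AB^{-n}$. I would set $\eta=n\kappa-m+\mu$, so that $|\eta|<AB^{-n}$, and record the one nontrivial input about convergents: for the convergent $p/q$ of $\kappa$ one has $|q\kappa-p|=\|q\kappa\|$, because a convergent is a best approximation of the second kind and hence $p$ is exactly the integer nearest to $q\kappa$ (here $q>M\ge 1$ forces $q\ge 2$, so $|q\kappa-p|<\tfrac12$ and the nearest-integer property is legitimate).

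The central manipulation is to multiply the relation defining $\eta$ by $q$ and to split off an integer. Writing $q\eta=nq\kappa-mq+\mu q$ and substituting $q\kappa=p+(q\kappa-p)$ gives
\[
\mu q-(mq-np)=q\eta-n(q\kappa-p).
\]
Since $mq-np\in\Z$, the left-hand side is, in absolute value, at least the distance from $\mu q$ to the nearest integer, so
\[
\|\mu q\|\le\bigl|q\eta-n(q\kappa-p)\bigr|\le q|\eta|+n\,\|q\kappa\|\le q|\eta|+M\,\|q\kappa\|,
\]
where the last step uses $0<n\le M$. Rearranging and invoking the definition $\varepsilon=\|\mu q\|-M\|\kappa q\|$ yields $q|\eta|\ge\varepsilon>0$, i.e.\ $|\eta|\ge\varepsilon/q$.

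It then remains only to combine the two bounds on $|\eta|$. From $\varepsilon/q\le|\eta|<AB^{-n}$ I obtain $B^{n}<Aq/\varepsilon$, and taking logarithms (with $B>1$, so $\log B>0$) gives $n<\frac{\log(Aq/\varepsilon)}{\log B}$, which contradicts the assumed lower bound on $n$. The only genuinely delicate point in the whole argument is the identity $|q\kappa-p|=\|q\kappa\|$: one must use that $p/q$ is a \emph{convergent}, not merely some rational approximation, so that the error $q\kappa-p$ is as small as the distance $\|q\kappa\|$ permits; this is precisely what keeps the term $n\,\|q\kappa\|$ small enough for the quantity $\varepsilon$ to be meaningful and positive. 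Everything else is the triangle inequality and bookkeeping, so I expect no serious obstacle beyond making sure the integer $mq-np$ is peeled off correctly and that the hypotheses $q>M$ and $\varepsilon>0$ are used exactly where indicated.
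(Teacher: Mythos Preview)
Your argument is correct and is precisely the standard Dujella--Peth\H{o} proof that the paper invokes; the paper itself gives no details beyond the reference, and you have faithfully reconstructed that argument. The only minor points worth flagging are that you tacitly use $B>1$ (needed for $\log B>0$) and $n\ge 0$ (needed for $|n|\,\|q\kappa\|\le M\,\|q\kappa\|$), neither of which is explicit in the lemma as stated, but both hold in every intended application and in the original Dujella--Peth\H{o} formulation.
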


\begin{proof}
  The same as the proof of Lemma 5 of \cite{Dujella_Petho}.
\end{proof}
We apply Lemma \ref{l:DP} with the straightforward choices
$$
\kappa = -\frac{\log|\alpha_4|}{\log \alpha_1},\quad \mu = - \log\frac{g_4(\alpha_1)}{g_4(\alpha_4)}/\log\alpha_1,
$$
$$
M= 10^{20}>|n|+1,\quad B=|\alpha_4|^{-0.214} = 1.056, \quad A= 9.65/B= 9.14.
$$ Notice that $-n+1=|n|+1\ge 226$.

We have to compute the continued fraction expansion of $\kappa$ so far that the denominator of the last, say $k_0$-th, convergent is larger than $M$. The denominator of the $k$-th convergent of any continued fraction is bounded below by $\left(\frac{1+\sqrt{5}}{2}\right)^k$, thus we may take $k_0=95$.

For safety we performed all computation with $100$ decimal digits precision. Already the denominator of the $45$th convergent - $66618036593827352256020$ - was larger than $M$. As the corresponding $\varepsilon>0.04$ we conclude that \eqref{e:redhato} is impossible if $|n|> 1063$. We repeat the above computation, but this time with M=1064. Now already the denominator of the $8$th convergent - 3336 - is larger than $1064$, and as the corresponding $\varepsilon>0.136$ we obtain the new bound $|n|\le 225$. At the beginning of the proof we tested that \eqref{eq:4-gen} has no solution below this bound, hence the proof is complete.
$\Box$\\[1ex]

\medskip
{\bf Acknowledgement} I thank L\'aszl\'o Szalay for his valuable remarks, especially pointing out an error, on an earlier version of this paper.
\medskip

\end{document}